\newtheorem{theorem}{Theorem}[section]
\newtheorem{lemma}[theorem]{Lemma}
\newtheorem{prop}[theorem]{Proposition}
\theoremstyle{definition}
\newtheorem{definition}[theorem]{Definition}
\newtheorem{example}[theorem]{Example}
\newtheorem*{ack}{Acknowledgments}
\newcommand{\Z}{\mathbb{Z}}
\newcommand{\Q}{\mathbb{Q}}
\newcommand{\C}{\mathbb{C}}
\newcommand{\FF}{\mathbb{F}}
\renewcommand{\L}{\mathbb{L}}
\newcommand{\TT}{\mathbb{T}}
\newcommand{\PP}{\mathbb{P}}
\renewcommand{\k}{\Bbbk}
\DeclareMathAlphabet{\pazocal}{OMS}{zplm}{m}{n}
\newcommand{\A}{{\pazocal{A}}}
\newcommand{\B}{{\pazocal{B}}}
\newcommand{\OO}{{\pazocal O}}
\newcommand{\LL}{{\pazocal L}}
\newcommand{\RR}{{\mathcal R}}
\newcommand{\VV}{{\mathcal V}}
\newcommand{\pP}{{\mathcal P}}
\newcommand{\h}{{\mathfrak{h}}}
\DeclareMathOperator{\rank}{rank}
\DeclareMathOperator{\im}{im}
\DeclareMathOperator{\Hom}{{Hom}}
\DeclareMathOperator{\Type}{Type}
\newcommand{\surj}{\twoheadrightarrow}
\def\dot{\mathchar"013A}
\newcommand{\hdot}{{\raise1pt\hbox to0.35em{\Huge $\dot$}}}
\definecolor{dkgreen}{RGB}{0,100,0}
\definecolor{dkbrown}{RGB}{139,69,19}
\begin{document}
\date{October 25, 2016}

\title[Modular equalities for complex reflection arrangements]{%
Modular equalities for complex reflection arrangements}

\author[A.D.~M\u acinic]{Daniela Anca~M\u acinic}
\address{Simion Stoilow Institute of Mathematics,
P.O. Box 1-764, RO-014700 Bucharest, Romania}
\email{Anca.Macinic@imar.ro}

\author[\c S.~Papadima]{\c Stefan Papadima$^1$}
\address{Simion Stoilow Institute of Mathematics,
P.O. Box 1-764,
RO-014700 Bucharest, Romania}
\email{Stefan.Papadima@imar.ro}
\thanks{$^1$Partially supported by the Romanian Ministry of
National Education, CNCS-UEFISCDI, grant PNII-ID-PCE-2012-4-0156}

\author[C.R.~Popescu]{Clement Radu Popescu$^2$}
\address{Simion Stoilow Institute of Mathematics, 
P.O. Box 1-764, RO-014700 Bucharest, Romania}
\email{Radu.Popescu@imar.ro}
\thanks{$^2$Supported by a grant of the Romanian
National Authority for Scientific Research, CNCS-UEFISCDI,
project number PN-II-RU-TE-2012-3-0492}

\subjclass[2010]{Primary
14F35,  
32S55;  
Secondary
20F55, 
52C35,  
55N25.  
}

\keywords{Milnor fibration, algebraic monodromy, hyperplane arrangement,
complex reflection group, resonance variety, characteristic variety,
modular bounds.}

\begin{abstract}
We compute the combinatorial Aomoto--Betti numbers $\beta_p(\A)$
of a complex reflection arrangement. When $\A$ has rank at least $3$,
we find that $\beta_p(\A)\le 2$, for all primes $p$. Moreover, $\beta_p(\A)=0$
if $p>3$, and $\beta_2(\A)\ne 0$ if and only if $\A$ is the Hesse arrangement.
We deduce that the multiplicity $e_d(\A)$ of an order $d$ eigenvalue of the
monodromy action on the first rational homology of the Milnor fiber is equal to
the corresponding Aomoto--Betti number, when $d$ is prime. We give a uniform
combinatorial characterization of the property $e_d(\A)\ne 0$, for $2\le d\le 4$.
We completely describe the  monodromy action for full monomial arrangements of
rank $3$ and $4$. We relate $e_d(\A)$ and $\beta_p(\A)$ to multinets, on an
arbitrary arrangement.
\end{abstract}

\maketitle
\setcounter{tocdepth}{1}
\tableofcontents

\section{Introduction and statement of results}
\label{sec:intro}

\subsection{Milnor fibration and monodromy}
\label{ss11}

The complement $M$ of a degree $n$ complex hypersurface in $\C^l$,
$\{ f=0\}$, and the associated Milnor fibration, $f:M \to \C^{\times}$,
first analysed by Milnor in his seminal book \cite{M}, attracted a lot of attention
over the years. Multiplication by $\exp (\frac{2\pi \sqrt{-1}}{n})$ induces
the geometric monodromy action on the associated {\em Milnor fiber} $F=f^{-1}(1)$,
$h:F\to F$, and the {\em algebraic monodromy action},
$\{ h_i: H_i(F, \Q) \to H_i(F, \Q) \}$.

Computing $h_i$ is a major problem in the field, when $f$ has a non-isolated singularity  at $0$.
Even for the defining polynomial of a (central) complex hyperplane arrangement $\A$ in $\C^l$ and $i=1$, 
the answer is far from being clear.
This case was tackled in the recent literature by many authors,
who used a variety of tools; see for instance \cite{PS14} for a brief survey.
In this paper, we focus on {\em reflection arrangements}, associated to finite complex reflection groups.

It is well-known that every arrangement complement $M_\A$ has the homotopy type of a connected, 
finite CW-complex with torsion-free homology, whose first integral homology group, $H_1(M_\A, \Z)=\Z^n$, comes 
endowed with a natural basis, given by meridian loops around the hyperplanes.

It is also well-known that, for an arbitrary arrangement $\A$, $h_1$ induces a $\Q[\Z]$-module decomposition,
\begin{equation}
\label{eq:monintro}
H_1(F_\A, \Q)= \bigoplus_{d}(\Q[t] \slash \Phi_d(t))^{e_d(\A)},
\end{equation}
where $\Phi_d$ is the $d$-th cyclotomic polynomial, $e_d(\A)=0$ if $d \notdivides n$, and $e_1(\A)=n-1$.
See for instance \cite[(1.1)]{MP}. 

A pleasant feature of hyperplane arrangements is the rich combinatorial structure encoded by the associated 
intersection lattice, $\LL_\bullet(\A)$, whose elements are the intersections of hyperplanes from $\A$, ranked by 
codimension and ordered by inclusion. In this context, the open monodromy action problem takes the following 
more precise form: are the multiplicities $e_d(\A)$ combinatorial? If so, give a formula involving only $\LL_\bullet(\A)$.

\subsection{Characteristic and resonance varieties}
\label{ss12}

Our approach to decomposition \eqref{eq:monintro} is topological, based on two types of jump loci, 
associated to CW-complexes
having the properties recalled for $M_\A$, and the interplay between them.

 The complex characteristic variety $\VV^i_q(M)$, sitting inside the character torus \hfill
$\TT(M) \coloneqq \Hom (H_1(M, \Z), \C^\times)=(\C^\times)^n$
is the locus of those $\rho \in \TT(M)$ for which $\dim_\C H_i(M, \C_\rho) \geq q$, where $\C_\rho$
denotes the associated rank $1$ local system on $M$.

Note that $ \Hom (H_1(M, \Z), \C)=H^1(M, \C)=\C^n$, and denote by 
$\exp: H^1(M, \C) \surj \TT(M)$ the natural exponential map.
For an integer $d \geq 1$, let $\rho_d \in \TT(M)$ be the exponential 
of the diagonal cohomology class equal to
$\frac{2 \pi \sqrt{-1}}{d}$, with respect to the distinguished $\Z$-basis. 
When $M=M_\A$ is an arbitrary arrangement complement and $d>1$, it is well-known that

\begin{equation}
\label{eq:bmult}
e_d(\A)=\dim_\C H_1(M_\A, \C_{\rho_d}) \, .
\end{equation}

See for instance \cite[\S 2.3]{MP}, and also \cite[Theorem 2.5 and Corollary 6.4]{DS} for more general results
of this type.

The resonance variety $\RR_q^i (M, \k)$ over a field $\k$, sitting inside $H^1(M, \k)$, is the locus of those 
$\sigma \in H^1(M, \k)$ for which $\dim_\k H^i (H^\bullet (M, \k), \sigma *) \geq q$, where 
$\sigma *$ denotes the left multiplication
by $\sigma$ in the cohomology ring. When $\k$ is the prime field $\FF_p$, 
denote by $\sigma_p \in H^1(M, \FF_p)$ 
the diagonal cohomology class equal to $1$, with respect to the distinguished $\Z$-basis, 
and define the modulo $p$ {\em Aomoto-Betti number} by

\begin{equation}
\label{eq:defbeta}
\beta_p(M) \coloneqq \dim_{\FF_p} H^1(H^\bullet (M, \FF_p), \sigma_p *)
\end{equation}

\noindent When $M=M_\A$ is an arrangement complement, we will replace $M$ by $\A$ in the notation.

By a celebrated theorem of Orlik and Solomon \cite{OS}, the cohomology ring of $M_\A$ is combinatorial. 
More precisely, $\beta_p(\A)$ may be computed from $\LL_{\leq 2}(\A)$, as well as $\RR^1_q(\A, \k)$, for all $q$ and $\k$.

\subsection{Modular bounds}
\label{ss13}

It follows from Theorem 11.3 in \cite{PS-eq} that

\begin{equation}
\label{eq:bound}
e_{p^s}(\A) \leq \beta_p(\A), \text{for all}\; s \geq 1 \, ,
\end{equation}

when $\A$ is an arbitrary arrangement.

Actually, the above modular bound holds for all CW-complexes considered in \S\ref{ss12}, 
with the multiplicity replaced by the value from equality \eqref{eq:bmult}, and is in general strict, in the broader context. 
Our first main result says that the modular bound \eqref{eq:bound} becomes an equality, for reflection arrangements and $s=1$.

\begin{theorem}
\label{thm:a}
Let $\A$ be a complex reflection arrangement. Then $e_p(\A)=\beta_p(\A)$, for all primes $p$. 
In particular, $e_p(\A)$ is determined by $\LL_{\leq 2}(\A)$.
\end{theorem}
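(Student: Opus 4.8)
The upper bound $e_p(\A)\le\beta_p(\A)$ is precisely the $s=1$ instance of \eqref{eq:bound}, so the content of the theorem is the reverse inequality $e_p(\A)\ge\beta_p(\A)$, together with an actual computation of $\beta_p(\A)$; the closing ``in particular'' is then free, since $\beta_p(\A)$ is computed from $\LL_{\le 2}(\A)$ via the Orlik--Solomon algebra. My plan is to first dispose of the reducible and low-rank arrangements and then treat the rank $\ge 3$ case through the Shephard--Todd classification. A reflection arrangement of a reducible group is a product, so I would record a compatibility lemma expressing both $e_p$ and $\beta_p$ of a product in terms of its factors, and check the rank $1$ and rank $2$ arrangements by hand, where the monodromy of a central line arrangement is classical.

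For rank $\ge 3$ the first task is the combinatorial computation of $\beta_p(\A)$. I would run the Aomoto complex of the Orlik--Solomon algebra over $\FF_p$ at the all-ones class $\sigma_p$, arrangement by arrangement across the imprimitive family $G(de,e,l)$ and the finitely many exceptionals, extracting $\dim_{\FF_p}H^1$. The target statement is that $\beta_p(\A)\le 2$, that $\beta_p(\A)=0$ for $p>3$, and that $\beta_2(\A)\ne 0$ only for the Hesse arrangement. This step is laborious but mechanical once the defining data of each reflection arrangement and its $\LL_{\le 2}$ are in hand.

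With $\beta_p$ known, the equality is immediate wherever it vanishes: if $\beta_p(\A)=0$ then \eqref{eq:bound} forces $e_p(\A)=0=\beta_p(\A)$. This reduces the lower bound to the short explicit list on which $\beta_p(\A)\ne 0$: the Hesse arrangement at $p=2$ and the handful of (monomial) arrangements carrying nonzero $\beta_3$. On this list I would exhibit the governing combinatorial structure directly---the Hesse pencil of cubics for the Hesse arrangement, the natural $3$-nets on the monomial arrangements---and use the multinet-to-characteristic-variety correspondence to locate the torsion character $\rho_p$ on a component of $\VV^1$ realizing the prescribed jump in $\dim_\C H_1(M_\A,\C_{\rho_p})=e_p(\A)$. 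Since the list is finite, a direct evaluation of the twisted Betti number at $\rho_p$ is available as an independent check.

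The main obstacle is this lower bound. The inequality \eqref{eq:bound} is a one-sided degeneration phenomenon, and matching it requires promoting modular resonance at $\sigma_p$ to genuine nonvanishing of complex local-system cohomology at $\rho_p$; the delicate point is not that $e_p\ge 1$ but that the multinet structure forces the \emph{exact} multiplicity $\beta_p$ (for the Hesse arrangement, a $4$-multinet must yield $e_2\ge 2$, not merely $e_2\ge 1$). Verifying that $\rho_p$---a character of order exactly $p$---lies on the distinguished component with the correct codimension is where the argument must be most careful; the preceding combinatorial computation of $\beta_p$ over the exceptional types, though long, is comparatively routine.
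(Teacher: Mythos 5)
Your overall architecture matches the paper's: reduce to rank $\ge 3$ via the product lemma and the known rank $\le 2$ case, compute $\beta_p$ through the Shephard--Todd classification, observe that the modular bound \eqref{eq:bound} settles every arrangement with $\beta_p(\A)=0$, and then handle the short list with $\beta_p(\A)\ne 0$ by producing multinets and pushing $\rho_p$ into $\VV^1_1(M_\A)$. Up to the point where $\beta_p(\A)=1$, this works exactly as in the paper: one $p$-reduced $k$-multinet with $p\mid k$ gives $e_p(\A)\ge 1$ (Theorem \ref{thm:d}), and \eqref{eq:bound} caps it at $1$.

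The genuine gap is in the cases with $\beta_p(\A)=2$, which you flag as ``the delicate point'' but do not actually resolve. For $\A(m,m,3)$ with $3\mid m$ (note: this is an infinite family, so your fallback of ``a direct evaluation of the twisted Betti number, since the list is finite'' is not available), a single reduced $3$-net places $\rho_3$ on one pencil-type component $W=\phi^*\TT(S)$ of $\VV^1_1(M_\A)$, and a generic character on a component coming from a $3$-net has $\dim_\C H_1(M_\A,\C_\rho)=k-2=1$. No amount of care about ``the correct codimension'' of that one component will produce $e_3\ge 2$; you need an additional input. The paper's solution is to construct a \emph{second}, combinatorially different reduced $3$-multinet $\mathcal{N}'$ on $\A(m,m,3)$ (blocks indexed by $\alpha\bmod 3$ rather than by the pair $ij$), verify the multinet axiom for it, show that the two isotropic planes $\phi^*H^1(S,\C)$ and $\phi'^*H^1(S,\C)$ intersect trivially so that $\rho_3$ lies on two \emph{distinct} irreducible components of $\VV^1_1(M_\A)$, and then invoke \cite[Proposition 6.9]{ACM} to conclude $\rho_3\in\VV^1_2(M_\A)$, i.e.\ $e_3\ge 2$. (For the Hessian arrangement the issue is milder, since a $4$-net component already forces $h^1=2$ at a generic character, and the paper simply cites $e_2(G_{25})=2$ as known.) Without the second multinet and the Artal Bartolo--Cogolludo--Matei argument, or some substitute for them, your proof does not close the case $\beta_3=2$, and the theorem remains unproved for $\A(3n,3n,3)$.
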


\subsection{Aomoto-Betti numbers for reflection arrangements}
\label{ss14}

Reflection arrangements have a distinguished history, going back as far as Jordan's work from 1878 on the symmetry group 
of the famous Hessian configuration. A related open problem is whether the {\em Hessian 
arrangement} is the only arrangement supporting a $4$-net. In Theorem \ref{thm:b}\eqref{b2} below, 
we solve this problem for reflection arrangements.

Finite complex reflection groups have been classified by Shephard and Todd \cite{ST} (see also \cite{C}, \cite{OT}).
Each such group $G$ gives rise to the complex reflection arrangement $\A(G)$, consisting of the fixed points of all reflections in $G$.
Among them, we have the monomial arrangements $\A(m,m,l)$ in $\C^l \; (l \geq 2)$, with defining polynomials
$\Pi_{1 \leq i< j \leq l}\ (z_i^m-z_j^m) \; (m \geq 1)$, and full monomial arrangements
$\A(m,1,l)$ in $\C^l \; (l \geq 2)$, defined by $z_1 \dots z_l \cdot \Pi_{1 \leq i< j \leq l}\ (z_i^m-z_j^m) \; (m \geq 2)$.
We may now state our second main result.

\begin{theorem}
\label{thm:b}
For a complex reflection arrangement $\A$ of rank at least $3$, the following hold.
\begin{enumerate}
\item \label{b1} If $p>3$, then $\beta_p(\A)=0$.
\item \label{b2} $\beta_2(\A)\neq 0
\Leftrightarrow \beta_2(\A)=2 \Leftrightarrow \A$ supports a $4$-net $\Leftrightarrow \A$ is the Hessian arrangement.
\item \label{b3} The only cases when  $\beta_3(\A)\neq 0$ are: $\A(m,1,3)$ with $m \equiv 1 \;(\text{mod} \;3)$, 
where $\beta_3=1$; $\A(m,m,3)$ with $m \geq 2$, where $\beta_3=1$ if $m \nequiv 0 \;(\text{mod}\; 3)$ 
and otherwise $\beta_3=2$; $\A(m,m,4)$, where $\beta_3=1$.
\item \label{b4} In particular, $\beta_p(\A) \leq 2$, for all primes $p$.
\end{enumerate}
\end{theorem}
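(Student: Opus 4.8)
The starting point is an explicit combinatorial description of $\beta_p(\A)$ through the rank-$2$ flats, which is exactly what $\LL_{\leq 2}(\A)$ records. Writing a class in $H^1(M_\A,\FF_p)$ as $\eta=\sum_{H\in\A}a_H e_H$ and $\sigma_p=\sum_H e_H$, the Orlik--Solomon relations localize the product $\sigma_p\cdot\eta\in H^2(M_\A,\FF_p)$ over the rank-$2$ flats. A direct expansion shows that $\sigma_p\cdot\eta=0$ if and only if, at every rank-$2$ flat $X$, setting $n_X=\#\{H:H\supset X\}$ and $S_X=\sum_{H\supset X}a_H$, one has
\[
n_X\,a_H\equiv S_X\pmod p\qquad\text{for every }H\supset X.
\]
Thus each flat $X$ imposes one of two kinds of constraint: if $p\nmid n_X$ it forces all the $a_H$ with $H\supset X$ to be equal, while if $p\mid n_X$ it merely requires $S_X\equiv 0$. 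Since $\sigma_p^2=0$ and $\sigma_p$ spans the image of $H^0\to H^1$, one gets $\beta_p(\A)=\dim_{\FF_p}\ker(\sigma_p\cdot)-1$, so the whole theorem becomes a family of linear-algebra problems over $\FF_p$ indexed by the flats of $\A$.

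Next I would invoke the Shephard--Todd classification to organize the computation. Up to the reflection hyperplanes they determine, the irreducible reflection arrangements of rank $\geq 3$ are the monomial arrangements $\A(m,m,l)$ and the full monomial arrangements $\A(m,1,l)$ for $l\geq 3$, together with finitely many exceptional arrangements. For the families I would record the rank-$2$ flats explicitly: for $\A(m,m,3)$, say, there are $m^2$ triple points (one hyperplane from each of the three pencils $z_i=\zeta z_j$) and three coordinate points of multiplicity $m$, and the higher families and $\A(m,1,l)$ admit a similarly transparent description of their multiple points and multiplicities.

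With this in hand, part \eqref{b1} is the assertion that for $p>3$ the equality constraints alone already pin $\ker(\sigma_p\cdot)$ down to the diagonal. The mechanism is that the flats of multiplicity prime to $p$—in particular every double and triple point, since $p>3$—force equality, and in a reflection arrangement these low-multiplicity flats suffice to connect all hyperplanes. Concretely, in $\A(m,m,3)$ the triple point indexed by $(\zeta,\eta)$ forces $a^{(1)}_\zeta=a^{(2)}_\eta=a^{(3)}_{\zeta\eta}$ for all $\zeta,\eta$, and letting the indices vary makes every $a_H$ equal, so $\beta_p=0$; the same propagation argument, applied pencil by pencil, handles $\A(m,m,l)$ and $\A(m,1,l)$, and the finitely many exceptional arrangements are dispatched by the same flat-connectivity criterion, verified case by case.

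Parts \eqref{b2} and \eqref{b3} are the genuinely computational heart, where $p\mid n_X$ occurs and the sum-zero constraints produce extra kernel. Here I would solve the $\FF_p$-linear systems for the families directly: for $\A(m,m,3)$ the three pencils collapse to three unknowns $u,v,w$ with the single relation $u+v+w\equiv 0$ when $3\nmid m$, giving $\beta_3=1$, while $3\mid m$ loosens the coordinate-point constraints and a rank computation yields $\beta_3=2$; the remaining entries of the list come from the analogous analysis of $\A(m,1,3)$ and $\A(m,m,4)$, and the exceptionals from a finite check. The equivalences in \eqref{b2} come from reading these kernels as net structures: a $k$-net forces base points of multiplicity $k$ whose mod-$p$ sum-zero conditions, when $p\mid k$, enlarge the kernel, a $4$-net over $\FF_2$ giving $\dim\ker=3$, i.e.\ $\beta_2=2$, and the rigidity of reflection arrangements forces any such $4$-net to live on the Hessian arrangement. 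Finally \eqref{b4} is immediate, being the maximum of the values established in \eqref{b1}--\eqref{b3}. The main obstacle is the rank computation of the structured $\FF_p$-systems for $\A(m,m,l)$ and $\A(m,1,l)$ in the divisible subcases, where the clean collapse to a few unknowns fails and the precise value $\beta=2$ must be extracted, together with the classification input identifying the $4$-net with the Hessian arrangement and the explicit verification of the exceptional cases.
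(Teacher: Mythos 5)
Your reduction of $\beta_p$ to the linear system over the rank-$2$ flats, your use of the Shephard--Todd classification, and your treatment of the infinite families $\A(m,m,l)$ and $\A(m,1,l)$ (connectivity via low-multiplicity flats for vanishing, explicit solution of the residual $\FF_3$-systems in the divisible subcases) all match the paper's argument closely; the value $\beta_3(\A(3n,3n,3))=2$ is obtained there exactly as you suggest, by solving $a_\alpha+b_\beta+c_{\alpha+\beta}=0$ and checking the pencil constraints are implied. The genuine gap is in your one-line dismissal of the exceptional arrangements as ``dispatched by the same flat-connectivity criterion, verified case by case.'' This is both unsubstantiated and, in at least one case, false as stated: for $H_3$ at $p=5$ the equality-forcing flats (the double and triple points) attach each hyperplane to only $7$ of the $15$, so connectivity of the relevant graph is not available, and the paper must instead run a two-valued-cocycle counting argument (its Lemma \ref{lem:frac}(\ref{f13})) that exploits the absence of flats of multiplicity $5r$, $r>1$. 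More importantly, a direct case-by-case connectivity check is not even feasible without further tools for the large exceptionals ($G_{34}$ with $126$ hyperplanes, $E_6$--$E_8$); the paper never touches these directly, but kills them by an induction on rank (Lemma \ref{lem:indr}) resting on Steinberg's theorem that the fixer $G_X$ of a flat is again a reflection group with $\A(G)_X=\A(G_X)$, combined with the orbit-type data of Table C in Orlik--Terao to read off flat multiplicities without coordinates. Your proposal contains no substitute for this machinery, and that is where the bulk of the proof's content lies.

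A second, smaller issue is the implication ``$\A$ supports a $4$-net $\Rightarrow$ $\A$ is the Hessian arrangement'' in part (\ref{b2}), which you attribute to an unexplained ``rigidity of reflection arrangements.'' No such rigidity statement is needed or proved: the correct way to close the circle is that a $4$-net yields a $3$-dimensional isotropic subspace of $H^1(M_\A,\FF_2)$ containing $\sigma_2$, hence $\beta_2(\A)\geq 2>0$, and the exhaustive computation of all $\beta_2$'s then shows the Hessian is the only reflection arrangement of rank at least $3$ with $\beta_2\neq 0$. You have all the ingredients for this elsewhere in your outline, but as written the step is asserted rather than derived.
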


We imposed the rank condition since the Aomoto-Betti numbers for arrangements of rank at most $2$ are known 
(see for instance \cite{MP}). Moreover, when $\A =\A(m,m,2)$ with $m \equiv 0 \;(\text{mod}\; p)$ it is easy
to see that $\beta_p(\A) = m-2$. Thus, 
the conclusion of Theorem \ref{thm:b}\eqref{b4}  no longer holds, for $m>4$.

The resonance varieties $\RR^1_q(\A, \C)$ are quite well-understood, due to work by  
Falk, Libgober, Marco-Buzun\'{a}riz and Yuzvinsky; see \cite{FY}, \cite{MB}. 
There are results in positive characteristic which show 
a different qualitative behaviour of resonance in this case; see for instance Falk  \cite{F07}. 
The complete picture over $\FF_p$ largely remains a mystery. 
Our Theorems \ref{thm:b} and \ref{thm:a}, together with recent vanishing results due to Dimca and Sticlaru
(\cite{Di}, \cite{DiS}), verify the strong modular conjecture from \cite{PS14}, for the important class 
of complex reflection arrangements.

\subsection{A combinatorial non-triviality test}
\label{ss15}

Dimca, Ibadula and M\u acinic asked in \cite{DIM} the following natural question: if $d>1$ and $e_d(\A) \neq 0$, 
does this imply that $\rho_d \in \exp\ \RR_1^1(\A, \C)$? A positive answer (for all $d$) would imply that 
the non-triviality of $h_1$ is combinatorial, since the converse implication is known, for all $d$.

\begin{theorem}
\label{thm:c}
If $\A$ is a complex reflection arrangement, then the above question has a positive answer, for $2 \leq d \leq 4$.
\end{theorem}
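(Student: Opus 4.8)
The plan is to establish only the forward implication $e_d(\A)\neq 0 \Rightarrow \rho_d \in \exp \RR^1_1(\A,\C)$, since its converse is already known: one has $\exp \RR^1_1(\A,\C)\subseteq \VV^1_1(M_\A)$, and $\rho_d \in \VV^1_1(M_\A) \Leftrightarrow e_d(\A)\neq 0$ by \eqref{eq:bmult}. I would first dispose of the case $\rank \A \leq 2$, where $\A$ is a pencil and both jump loci are classical, so the implication is immediate (see \cite{MP}); from now on assume $\rank \A \geq 3$, so that Theorems \ref{thm:a} and \ref{thm:b} apply.

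The next step reduces each admissible $d$ to the presence of a suitable net. For the primes $d=2,3$, Theorem \ref{thm:a} gives $e_d(\A)=\beta_d(\A)$, so $e_d(\A)\neq 0$ forces $\beta_d(\A)\neq 0$. Theorem \ref{thm:b}(\ref{b2}) then shows that $\beta_2(\A)\neq 0$ happens only for the Hessian arrangement, which supports a $4$-net, while Theorem \ref{thm:b}(\ref{b3}) shows that $\beta_3(\A)\neq 0$ happens only for $\A(m,1,3)$, $\A(m,m,3)$ and $\A(m,m,4)$, each of which supports a $3$-(multi)net. For $d=4$, which is not prime, Theorem \ref{thm:a} is unavailable, so instead I would invoke the modular bound \eqref{eq:bound}: $e_4(\A)\leq \beta_2(\A)$, whence $e_4(\A)\neq 0$ again forces $\A$ to be the Hessian arrangement with its $4$-net.

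It then remains to read off $\rho_d$ from the net. Using the Falk--Yuzvinsky description of the nonlocal components of $\RR^1_1(\A,\C)$ \cite{Yuz01}, a $k$-net $\NN$ with classes $\A_1,\dots,\A_k$ contributes the linear component $\Sigma_\NN=\{\,x\in\C^n : x_H=b_j \text{ for } H\in\A_j,\ \sum_{j=1}^k b_j=0\,\}$, obtained by pulling back $H^1(\PP^1\setminus\{k \text{ points}\})$ along the associated pencil. Since $\exp$ has kernel $2\pi\ii\,\Z^n$ in the meridian basis, $\rho_d\in\exp\Sigma_\NN$ is equivalent to $\tfrac1d(1,\dots,1)\in\Sigma_\NN+\Z^n$; writing $b_j=\tfrac1d-m_j$ with $m_j\in\Z$, the constraint $\sum_j b_j=0$ becomes $\sum_j m_j=k/d$, which is solvable exactly when $d\mid k$. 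Hence the Hessian $4$-net yields $\rho_2,\rho_4\in\exp\RR^1_1(\A,\C)$ (as $2,4\mid 4$) and the $3$-nets yield $\rho_3\in\exp\RR^1_1(\A,\C)$ (as $3\mid 3$), which covers all three values of $d$.

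The main obstacle is this last step for those arrangements whose relevant structure is a genuine multinet rather than a net, most notably $\A(m,1,3)$ with $m\equiv 1\ (\mathrm{mod}\ 3)$, where the coordinate hyperplanes force nonconstant multiplicities $m_H$ within a class and the pullback component becomes $\{\,x_H=m_H b_j\,\}$. There the clean divisibility $d\mid k$ must be replaced by a direct check that the diagonal fractional vector $\tfrac1d(1,\dots,1)$ is congruent mod $\Z^n$ to a point of $\Sigma_\NN$, that is, that it is compatible with the multiplicity pattern of the net; I expect this to require exhibiting each multinet together with its multiplicities explicitly, rather than a single uniform argument.
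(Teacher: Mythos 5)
Your overall strategy is the paper's: reduce to the forward implication, dispose of $\rank\A\leq 2$ via $\VV^1_1(M_\A)=\exp\RR^1_1(\A,\C)$, use the modular bound \eqref{eq:bound} together with Theorems \ref{thm:a} and \ref{thm:b} to pin the arrangement down to the Hessian arrangement (for $d=2,4$) or to $\A(m,1,3)$, $\A(m,m,3)$, $\A(m,m,4)$ (for $d=3$), and then realize $\rho_d$ inside the exponential of the resonance component attached to a $k$-(multi)net with $d\mid k$. Your divisibility computation for genuine nets (write $b_j=\tfrac1d-m_j$ and solve $\sum_j m_j=k/d$) is correct and is precisely the content of the paper's Theorem \ref{thm:d}(\ref{d1}) in the reduced case; note also that $\A(m,m,4)$ does carry a $3$-net (blocks $\{H_{ij^\mu},H_{kh^\nu}\}$ for the three pairings of $\{1,2,3,4\}$), which the paper verifies from the list of $2$-flats in \S\ref{ss23}.

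The one step you leave open --- the $3$-multinet on $\A(m,1,3)$, where the coordinate hyperplanes carry multiplicity $m$ and the others multiplicity $1$ --- is a genuine gap as written, but it closes with a single observation rather than an ad hoc congruence check: in the only case where $\beta_3\neq 0$, namely $m\equiv 1\ (\bmod\ 3)$, this multinet is \emph{$3$-reduced}, i.e.\ $\mathfrak{m}\equiv 1\ (\bmod\ 3)$. Then for $b_j\in\tfrac13\Z$ one has $\mathfrak{m}_H b_j\equiv b_j\ (\bmod\ \Z)$, so the pullback component $\{x_H=\mathfrak{m}_H b_j,\ \sum_j b_j=0\}$ exponentiates to a subtorus containing $\rho_3$ by exactly your net computation. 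The paper isolates this as the hypothesis $\mathfrak{m}\equiv 1\ (\bmod\ k)$ in Theorem \ref{thm:d}, which yields one uniform argument (pull back $\rho_k(S)$ along $\phi$, then take the power $k/d$) covering all four families at once; the only case-by-case work left is checking that each multinet is $k$-reduced, which for $\A(m,1,3)$ is automatic under the hypothesis $m\equiv 1\ (\bmod\ 3)$ forced by $\beta_3\neq 0$.
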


We derive Theorem \ref{thm:c} and Theorem \ref{thm:a} from Theorem \ref{thm:b} with 
the aid of  a general result (proved in Theorem \ref{thm:d})
that relates combinatorial structures on arrangements satisfying the key multinet axiom introduced by
Falk-Yuzvinsky in \cite{FY} to the algebraic monodromy action and the
Aomoto-Betti numbers of an arrangement. The tools from our paper also enable us to give a complete,
combinatorial description in Proposition \ref{prop:full} for the monodromy action on $H_1(F_\A, \Q)$, 
in the case of full monomial arrangements of rank $3$ and $4$.
Related results may be found in \cite{L} and \cite{PS14}. By Theorems \ref{thm:a}, \ref{thm:b} and \cite{Di, DiS},
this complete, combinatorial description holds for arbitrary complex reflection arrangements.

\section{Non-exceptional reflection arrangements}
\label{sec:fam}

We first compute the Aomoto-Betti numbers of monomial and full monomial arrangements.

\subsection{The classification} (cf. \cite{ST}, \cite{C}, \cite{OT})
\label{ss21}

A finite reflection group $G$ decomposes as a product of irreducible factors of the
same kind. At the level of arrangements, $\A(G\times G')$ is the product $\A(G)\times \A(G')$, and
the corresponding complements satisfy $M(G\times G') = M(G) \times M(G')$. (Whenever convenient, we will abbreviate
notation and replace  $\A(G)$ by $G$, when speaking about associated objects.)

The irreducible reflection arrangements of rank at least 3 comprise the monomial and full monomial arrangements,
$\A(m,m,l)\ (m\geq 2, l\geq 3\ {\rm or}\ m=1, l\geq 4)$ and $\A(m,1,l)\ (l\geq 3)$, plus the exceptional arrangements
$\A(G_{23}) - \A(G_{37})$. The Hessian arrangement is $\A(G_{25})$. See for instance \cite{OT} for the notations.

\subsection{Vanishing criteria}
\label{ss22}

Given an arbitrary arrangement $\A$ and an $r$--flat $X\in \LL_r(\A)$, 
set $\A_X \coloneq \{H\in\A | H\supseteq X\}$, and define
the multiplicity of $X$ to be equal to $|\A_X|$.

Using the distinguished $\Z$-basis, we identify an element $\eta\in H^1(M_\A,\FF_p)$ 
with the family $\{\eta_H\in \FF_p\}_{H\in\A}$.
We denote by $Z_p(\A)\subseteq H^1(M_\A,\FF_p)$ the kernel of $\sigma_p *$.

Definition \eqref{eq:defbeta} implies that $\beta_p(\A)=\dim_{\FF_p} Z_p(\A)-1.$
Our computations are based on the following well-known result (see e.g. \cite{PS14}).

\begin{lemma}
\label{lem:zeq}
An element $\eta$ belongs to $Z_p(\A)$ if and only if, for any $X\in\LL_2(\A)$,
\[
 \left\{
  \begin{array}{lccr}
\sum_{H \in\A_X}\eta_H = 0 & \text{if} & |\A_X|\equiv 0 & \text{(mod p)}\\
\eta_H = \eta_{H'}, \forall\ H, H'\in\A_X & \text{if} & |\A_X|\nequiv 0 & \text{(mod p)}
  \end{array}
 \right.
\]
\end{lemma}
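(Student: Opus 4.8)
The plan is to unwind the definition of the Aomoto complex over $\FF_p$ and identify the kernel condition explicitly. Recall that the Orlik--Solomon algebra $H^\bullet(M_\A, \FF_p)$ is generated in degree $1$ by classes $e_H$ dual to the meridians, one for each $H \in \A$, and that $\sigma_p = \sum_{H \in \A} e_H$ is the diagonal class. Writing $\eta = \sum_H \eta_H e_H$, the element $\eta$ lies in $Z_p(\A) = \ker(\sigma_p \cdot)$ precisely when $\sigma_p \cdot \eta = 0$ in $H^2(M_\A, \FF_p)$. So the whole proof amounts to translating the vanishing of this degree--$2$ product into the stated system of linear conditions indexed by the rank--$2$ flats $X \in \LL_2(\A)$.

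First I would recall the structure of $H^2$ of the Orlik--Solomon algebra. The degree--$2$ part decomposes as a direct sum over the flats $X \in \LL_2(\A)$ of local pieces, where the local piece attached to $X$ is spanned by the products $e_H e_{H'}$ with $H, H' \in \A_X$, subject only to the Orlik--Solomon relations coming from $X$: for any three hyperplanes $H, H', H''$ through $X$ one has the relation $e_{H'}e_{H''} - e_H e_{H''} + e_H e_{H'} = 0$. A standard consequence is that, fixing one reference hyperplane $H_0 \in \A_X$, the classes $e_{H_0} e_H$ for $H \in \A_X \setminus \{H_0\}$ form a basis of the local piece, which therefore has dimension $|\A_X| - 1$. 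The key point is that products $e_H e_{H'}$ with $H$ and $H'$ lying in \emph{different} rank--$2$ flats are linearly independent and contribute to distinct summands, so the computation genuinely decouples flat by flat.

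Next I would compute the product $\sigma_p \cdot \eta = \sum_{H} \sum_{H'} \eta_{H'} \, e_H e_{H'}$ and project onto the summand indexed by a fixed $X \in \LL_2(\A)$. Only the terms with both $H, H' \in \A_X$ survive in that summand (using that $e_H e_{H'}=0$ unless $H,H'$ are comparable, i.e. meet in a flat of $\LL_2$, together with the decoupling above), giving the local contribution $\sum_{H, H' \in \A_X} \eta_{H'} \, e_H e_{H'}$. Rewriting this in the chosen basis $\{ e_{H_0} e_H \}$ via the Orlik--Solomon relation and collecting coefficients, one finds that the vanishing of this local expression is equivalent to the two alternatives in the statement: if $|\A_X| \equiv 0 \pmod p$ the only constraint that survives is $\sum_{H \in \A_X} \eta_H = 0$, whereas if $|\A_X| \not\equiv 0 \pmod p$ the surviving constraints force all the $\eta_H$ with $H \in \A_X$ to be equal. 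Since $H^2$ is the direct sum of the local pieces, $\sigma_p \cdot \eta = 0$ if and only if every local contribution vanishes, which is exactly the conjunction over all $X \in \LL_2(\A)$ of the stated conditions.

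The main obstacle, and the step deserving the most care, is the coefficient bookkeeping in the local computation: one must correctly account for the $p$-dependence that distinguishes the two cases. The cleanest route is to expand $\sum_{H,H' \in \A_X} \eta_{H'} e_H e_{H'}$ after substituting the relation $e_H e_{H'} = e_{H_0}e_{H'} - e_{H_0}e_H$, so that the coefficient of each basis vector $e_{H_0} e_H$ becomes an explicit $\FF_p$-linear form in the $\eta$'s involving both the individual $\eta_H$ and the total sum $\sum_{H' \in \A_X} \eta_{H'}$; setting $m = |\A_X|$, the factor $m$ appears precisely where the dichotomy $m \equiv 0$ versus $m \not\equiv 0 \pmod p$ determines whether that term can be inverted. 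Since this lemma is explicitly labelled well known and cited to \cite{PS14}, I would keep this verification brief, emphasizing the decoupling over flats and the role of the reduction modulo $p$ rather than belaboring the linear algebra.
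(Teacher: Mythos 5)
Your proposal is correct, and it supplies an argument where the paper gives none: Lemma~\ref{lem:zeq} is stated there as ``well-known'' with a pointer to \cite{PS14}, so there is no in-paper proof to compare against. Your route is the standard one, via the Brieskorn/Orlik--Solomon direct-sum decomposition $H^2(M_\A,\FF_p)=\bigoplus_{X\in\LL_2(\A)}H^2_X$ with local bases $\{e_{H_0}e_H\}_{H\in\A_X\setminus\{H_0\}}$, and the bookkeeping you defer does come out as claimed: writing $m=|\A_X|$ and $S=\sum_{K\in\A_X}\eta_K$, the coefficient of $e_{H_0}e_H$ in the local component of $\sigma_p\cdot\eta$ is $m\,\eta_H-S$, whose vanishing for all $H$ gives exactly the dichotomy $S=0$ when $p\mid m$ versus $\eta$ constant on $\A_X$ when $p\nmid m$. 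One small phrasing slip: for a central arrangement any two distinct hyperplanes meet in a rank-$2$ flat, so no product $e_He_{H'}$ with $H\ne H'$ vanishes; the decoupling you want is simply that $e_He_{H'}$ lies in the summand indexed by $X=H\cap H'$, not that cross terms are zero. This does not affect the argument.
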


Clearly, $\beta_p(\A)=0$ if and only if $\eta\in Z_p(\A)$ implies that $\eta$ is constant.
A first useful vanishing criterion is due to Yuzvinsky.

\begin{lemma}
\label{lem:yuzv}(\cite{Yuz01}) If $|\A|\nequiv 0$ (mod p), then $\beta_p(\A) = 0$.
\end{lemma}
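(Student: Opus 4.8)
The plan is to combine the combinatorial description of $Z_p(\A)$ in Lemma \ref{lem:zeq} with the remark, noted just above it, that $\beta_p(\A)=0$ if and only if every $\eta\in Z_p(\A)$ is constant. So I would start from an arbitrary $\eta\in Z_p(\A)$ and show that all its coordinates $\eta_H$ coincide, bringing in the hypothesis $|\A|\nequiv 0\pmod p$ only at the very end.

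First I would fix a hyperplane $H\in\A$ and group the remaining hyperplanes according to the rank-$2$ flats through $H$. The relevant geometric fact is that two distinct hyperplanes of a central arrangement meet in a flat of rank exactly $2$, so the flats $X\in\LL_2(\A)$ with $X\subseteq H$ partition $\A\setminus\{H\}$: each $H'\neq H$ lies in the unique such flat $X=H\cap H'$. This gives both the count $\sum_{X\ni H}(|\A_X|-1)=|\A|-1$ and the decomposition $\sum_{X\ni H}\sum_{H'\in\A_X\setminus\{H\}}\eta_{H'}=\sum_{H'\neq H}\eta_{H'}$.

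The crux is the observation that the two alternatives in Lemma \ref{lem:zeq} collapse to a single local identity. If $X\ni H$ has $|\A_X|\nequiv 0\pmod p$, the $\eta$-values agree on $\A_X$, so $\sum_{H'\in\A_X\setminus\{H\}}\eta_{H'}=(|\A_X|-1)\eta_H$; if instead $|\A_X|\equiv 0\pmod p$, then $\sum_{H'\in\A_X}\eta_{H'}=0$ forces $\sum_{H'\in\A_X\setminus\{H\}}\eta_{H'}=-\eta_H$, which again equals $(|\A_X|-1)\eta_H$ modulo $p$ since $|\A_X|-1\equiv-1$. Summing this common identity over all $X\ni H$ and using the partition count yields $\sum_{H'\neq H}\eta_{H'}=(|\A|-1)\eta_H$, that is, $\sum_{H'\in\A}\eta_{H'}=|\A|\cdot\eta_H$.

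Finally, the left-hand side does not depend on $H$, so $|\A|\cdot\eta_H$ is the same element of $\FF_p$ for every $H$; since $|\A|$ is invertible by hypothesis, each $\eta_H$ equals $|\A|^{-1}\sum_{H'\in\A}\eta_{H'}$, so $\eta$ is constant and $\beta_p(\A)=0$. I do not anticipate a real obstacle: the single delicate point is verifying that the two cases of Lemma \ref{lem:zeq} reduce to the same identity $(|\A_X|-1)\eta_H$, which is exactly what makes the global sum telescope, and the coprimality of $|\A|$ with $p$ enters only in the last division.
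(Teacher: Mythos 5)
Your argument is correct: the key identity $\sum_{H'\in\A_X\setminus\{H\}}\eta_{H'}=(|\A_X|-1)\eta_H$ does hold in both cases of Lemma \ref{lem:zeq}, the rank-$2$ flats through $H$ do partition $\A\setminus\{H\}$, and the conclusion $\sum_{H'\in\A}\eta_{H'}=|\A|\cdot\eta_H$ forces $\eta$ to be constant when $|\A|$ is invertible mod $p$, which gives $\beta_p(\A)=\dim_{\FF_p}Z_p(\A)-1=0$. The paper does not prove this lemma but only cites \cite{Yuz01}; your derivation is precisely the standard argument behind that citation, so there is nothing to compare beyond noting that your proof fills in the omitted details correctly.
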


A second convenient situation is the following.

\begin{lemma}
\label{lem:prod} Assume that $\A = \A_1\times\A_2$.
 \begin{enumerate}
 \item \label{p1} $H_1(M_{\A},\C_{\rho_d}) = 0$, for all $d>1$.
 \item \label{p2} $\beta_p(\A) = 0$, for all primes p.
 \end{enumerate}
\end{lemma}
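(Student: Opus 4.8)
The plan is to reduce everything to the product structure of the complement. Since $\A=\A_1\times\A_2$, the excerpt already records that $M_\A=M_{\A_1}\times M_{\A_2}$, and the distinguished meridian basis of $H_1(M_\A,\Z)$ is the disjoint union of the meridian bases of the two factors, so $H^1(M_\A,\k)=H^1(M_{\A_1},\k)\oplus H^1(M_{\A_2},\k)$ for any coefficients. Consequently the two diagonal cohomology classes split as external sums: $\rho_d$ restricts to the analogous diagonal character $\rho_d^{(i)}$ on each $M_{\A_i}$, and $\sigma_p$ to $\sigma_p^{(i)}$. Throughout I use that both $\A_1$ and $\A_2$ are nonempty (each has at least one hyperplane), as is the case for a genuine product of reflection arrangements. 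The common mechanism behind both assertions is that a degree-one class on a product is assembled from a degree-zero and a degree-one contribution of the factors, and the degree-zero piece is annihilated by any nontrivial twist.

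For part~(\ref{p1}) I would apply the K\"unneth formula for rank-one local systems over $\C$. The system $\C_{\rho_d}$ on $M_{\A_1}\times M_{\A_2}$ is the external tensor product of $\C_{\rho_d^{(1)}}$ and $\C_{\rho_d^{(2)}}$, whence
\[
H_1(M_\A,\C_{\rho_d})=\bigoplus_{i+j=1}H_i(M_{\A_1},\C_{\rho_d^{(1)}})\otimes_\C H_j(M_{\A_2},\C_{\rho_d^{(2)}}).
\]
Each $M_{\A_i}$ is connected, so $H_0(M_{\A_i},\C_{\rho_d^{(i)}})$ vanishes as soon as $\rho_d^{(i)}$ is a nontrivial character. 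Since $\A_i$ is nonempty and $d>1$, the character $\rho_d^{(i)}$ sends every meridian generator to $\exp(2\pi\sqrt{-1}/d)\neq 1$ and is therefore nontrivial; hence both summands above, each containing an $H_0$ factor, vanish and $H_1(M_\A,\C_{\rho_d})=0$.

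For part~(\ref{p2}) I prefer a direct combinatorial argument via Lemma~\ref{lem:zeq}, just recorded. For $H_1\in\A_1$ and $H_2\in\A_2$ the intersection $H_1\times H_2$ is a rank-two flat $X$ of $\A$ with $\A_X=\{H_1,H_2\}$, so $|\A_X|=2$. If $p\neq 2$ then $2\not\equiv 0\ (\mathrm{mod}\ p)$ and Lemma~\ref{lem:zeq} gives $\eta_{H_1}=\eta_{H_2}$; if $p=2$ then $|\A_X|\equiv 0$ and the condition $\eta_{H_1}+\eta_{H_2}=0$ again reads $\eta_{H_1}=\eta_{H_2}$ in $\FF_2$. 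Fixing one factor and letting the other vary, these equalities force all coordinates of $\eta$ indexed by $\A_1\sqcup\A_2$ to coincide; thus every $\eta\in Z_p(\A)$ is constant, $\dim_{\FF_p}Z_p(\A)=1$, and $\beta_p(\A)=\dim_{\FF_p}Z_p(\A)-1=0$. (Alternatively, one may run the K\"unneth argument of part~(\ref{p1}) on the Aomoto complex $(H^\bullet(M_\A,\FF_p),\sigma_p\,\cdot)$, which is the tensor product of the two factor Aomoto complexes, the Koszul sign of the graded-commutative product matching the differential of a tensor product of cochain complexes.)

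Neither part presents a serious obstacle; the points demanding care are verifying that the diagonal classes genuinely restrict to the diagonal classes of the factors---immediate from the splitting of the meridian basis---and, in part~(\ref{p2}), confirming that every cross-flat $H_1\times H_2$ has multiplicity exactly two so that Lemma~\ref{lem:zeq} applies uniformly in both parities. The essential input in both cases is the same: nontrivial twisting kills zeroth (co)homology, which is precisely what makes the degree-one part of a product vanish.
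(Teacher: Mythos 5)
Your proof is correct, but it takes a genuinely different route from the paper's. The paper disposes of both parts at once by citing the general product formulas for jump loci from \cite[Proposition 13.1]{PS-bns}, namely $\VV_1^1(M_{\A_1}\times M_{\A_2})=\{1\}\times\VV_1^1(M_{\A_2})\cup\VV_1^1(M_{\A_1})\times\{1\}$ and its resonance analogue over $\FF_p$, and then observes that $\rho_d$ (resp.\ $\sigma_p$) has no coordinate equal to $1$ (resp.\ $0$), so it cannot lie in either piece. You instead unwind the mechanism behind those formulas in each case: for part~(\ref{p1}) the K\"unneth decomposition of $H_1$ with coefficients in the external tensor product $\C_{\rho_d^{(1)}}\boxtimes\C_{\rho_d^{(2)}}$, where each summand carries an $H_0$ factor killed by a nontrivial rank-one character on a connected space; for part~(\ref{p2}) the observation that every cross-flat $H_1\cap H_2$ with $H_i\in\A_i$ has multiplicity exactly $2$, so Lemma~\ref{lem:zeq} forces any $\eta\in Z_p(\A)$ to be constant (this is in effect Lemma~\ref{lem:graphv}(\ref{gv2}), since the cross-flats make $\Gamma_{(2)}(\A)$ contain a complete bipartite graph on $\A_1\sqcup\A_2$, hence connected when both factors are nonempty). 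Your version is more elementary and self-contained, and correctly isolates the one hypothesis both arguments silently need, namely that $\A_1$ and $\A_2$ are nonempty; the paper's version is shorter and applies verbatim to products of arbitrary spaces of the type considered in \S\ref{ss12}, not just arrangement complements. Your parenthetical K\"unneth argument on the Aomoto complex for part~(\ref{p2}) is also sound, since $\sigma_p\cdot$ is injective on $H^0$ of each nonempty factor.
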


\begin{proof}
Assuming the contrary, we infer from \cite[Proposition 13.1]{PS-bns} that
$\rho_d\in\VV_1^1(M_{\A_1}\times M_{\A_2}) = \{1\}\times\VV_1^1(M_{\A_2})\cup \VV^1_1(M_{\A_1})\times\{1\}$,
respectively 
$\sigma_p\in\RR_1^1(M_{\A_1}\times M_{\A_2},\FF_p) = \{0\}\times\RR_1^1(M_{\A_2},\FF_p)\cup 
\RR^1_1(M_{\A_1},\FF_p)\times\{0\}$,
in contradiction with the fact that all coordinates of $\rho_d$ (respectively $\sigma_p$) are different from 1 (respectively 0).
\end{proof}

By Lemma \ref{lem:prod}(\ref{p2}), we only need to compute $\beta_p(G)$ for an irreducible complex reflection group $G$.

To state the third vanishing criterion, we need to introduce certain simple graphs, associated to an arrangement $\A$ 
and an integer $k\geq 2$, with vertex set $\A$.
The edges of $\Gamma_k(\A)$ are defined by the condition $|\A_{H\cap H'}|\nequiv 0\ (\bmod\ k)$, for $k>2$, 
and by $|\A_{H\cap H'}|$ is either odd or equal to 2,
for $k=2$. The defining property for $\Gamma_{(k)}(\A)$ is $|\A_{H\cap H'}| = k$. Note that 
$\Gamma_{(2)}(\A)$ is a subgraph of $\Gamma_p(\A)$,
for all primes $p$. The equivalence relation on $\A$ associated to the edge paths of $\Gamma_k(\A)$, 
respectively $\Gamma_{(k)}(\A)$, will be denoted by
$\sim_k$, respectively $\sim_{(k)}$.

\begin{example}
\label{ex=braidgraphs}
Let $\A = \A(1,1,l)$ be the braid arrangement in $\C^l$, with hyperplanes labeled by the two-element subsets of
$\{ 1,\dots, l\}$. For $ij \ne st$, the multiplicity of the $2$--flat determined by the hyperplanes $z_i=z_j$ and $z_s=z_t$
is $2$ if $| \{ i,j,s,t \} |=4$ and $3$ if $| \{ i,j,s,t \} |=3$. Let $p$ be a prime. It follows from the above definition that 
$\Gamma_p(\A)$ is a complete graph (with full edge set) if $p\ne 3$. For $p=3$, $ij$ and $st$ are connected by an edge
of $\Gamma_3(\A)$ if and only if $| \{ i,j,s,t \} |=4$. We infer that the graph $\Gamma_3(\A)$ is discrete (with no edges)
when $l\le 3$, has $3$ connected components when $l=4$ (see the picture of the corresponding graph below), and is connected when $l\ge 5$.
\end{example}

\begin{center}
\begin{picture}(360,70)
\put(90,35){{\footnotesize $\Gamma_3(\A(1,1,4))$:}}
   \put(160,10){\line(0,1){50}}         \put(160,10){\circle*{4}}
   
     \put(160,0){{\footnotesize $12$}}  \put(160,65){{\footnotesize$34$}}
    \put(160,60){\circle*{4}}
     \put(180,0){{\footnotesize $13$}}   \put(180,65){{\footnotesize $24$}}
      \put(180,10){\line(0,1){50}}       \put(180,10){\circle*{4}}
         \put(200,10){\line(0,1){50}}         \put(200,10){\circle*{4}}\put(180,60){\circle*{4}}           \put(200,60){\circle*{4}}
         \put(200,0){{\footnotesize $14$}}   \put(200,65){{\footnotesize $23$}}
\end{picture}
\end{center} 

We obtain the following immediate consequences of Lemma \ref{lem:zeq}.

\begin{lemma}
\label{lem:graphv} Each of the properties below implies that $\beta_p(\A)=0$.
 \begin{enumerate}
  \item\label{gv1} The graph $\Gamma_p(\A)$ is connected.
  \item\label{gv2} The graph $\Gamma_{(2)}(\A)$ is connected.
  \item\label{gv3} For all $X\in\LL_2(\A)$, $p\notdivides |\A_X|$.
 \end{enumerate}
\end{lemma}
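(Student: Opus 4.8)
The plan is to derive each of the three vanishing criteria directly from the structure of $Z_p(\A)$ as described in Lemma~\ref{lem:zeq}, by showing in each case that every $\eta \in Z_p(\A)$ must be constant, which by the remark preceding Lemma~\ref{lem:yuzv} is equivalent to $\beta_p(\A)=0$. The unifying idea is that the hypotheses in \eqref{gv1}--\eqref{gv3} all force the ``$\eta_H = \eta_{H'}$'' branch of Lemma~\ref{lem:zeq} to propagate a single common value across all of $\A$.

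First I would treat \eqref{gv3}, the simplest case, as the base for the other two. If $p \notdivides |\A_X|$ for every $X \in \LL_2(\A)$, then for each pair $H, H'$ lying on a common rank-$2$ flat the second alternative of Lemma~\ref{lem:zeq} applies, giving $\eta_H = \eta_{H'}$. Since any two hyperplanes in $\A$ meet in some $X \in \LL_2(\A)$ (in a central arrangement any two distinct hyperplanes intersect in codimension $2$, possibly together with others, and that intersection determines a flat of rank $2$ whose multiplicity controls them), $\eta$ is forced to be globally constant, whence $\beta_p(\A)=0$.

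Next I would handle \eqref{gv1}. The edges of $\Gamma_p(\A)$ connect exactly those pairs $H, H'$ with $|\A_{H\cap H'}| \nequiv 0 \pmod p$, i.e.\ precisely the pairs to which the second alternative of Lemma~\ref{lem:zeq} applies, forcing $\eta_H = \eta_{H'}$. Thus $\eta$ is constant along every edge of $\Gamma_p(\A)$, and if the graph is connected, following an edge path between any two vertices shows $\eta$ takes a single value on all of $\A$; hence $\beta_p(\A)=0$. Case \eqref{gv2} is the analogous argument for $\Gamma_{(2)}(\A)$: its edges join pairs with $|\A_{H\cap H'}| = 2$, and a flat of multiplicity exactly $2$ satisfies $|\A_X| = 2 \nequiv 0 \pmod p$ for every odd prime, while for $p=2$ the corresponding constraint in Lemma~\ref{lem:zeq} with $|\A_X| = 2 \equiv 0$ reads $\eta_H + \eta_{H'} = 0$, i.e.\ $\eta_H = \eta_{H'}$ over $\FF_2$; either way $\eta$ is constant along such edges, so connectivity of $\Gamma_{(2)}(\A)$ again forces $\eta$ to be globally constant.

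The main point to get right is the bookkeeping in case \eqref{gv2} at the prime $p=2$, where the multiplicity-$2$ flats fall into the ``$\equiv 0 \pmod p$'' branch rather than the complementary one, so the conclusion $\eta_H = \eta_{H'}$ must be extracted from the sum condition $\sum_{K \in \A_X}\eta_K = 0$ over a two-element set rather than directly; this is why the statement isolates multiplicity exactly $2$ and why $\Gamma_{(2)}(\A)$ is noted to be a subgraph of $\Gamma_p(\A)$ for all primes. Once these sign considerations are recorded, all three parts are immediate consequences of Lemma~\ref{lem:zeq}, so there is no serious obstacle beyond verifying that in a central arrangement edge paths genuinely connect the relevant hyperplanes and that the $p=2$ case is stated correctly.
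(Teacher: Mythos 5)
Your proof is correct and is exactly the argument the paper has in mind: the lemma is stated there as an ``immediate consequence'' of Lemma~\ref{lem:zeq} with no written proof, and your three-case propagation of a single value of $\eta$ along edges of the graphs (respectively along the rank-$2$ flats) is the intended verification. The only small imprecision is that in part~(\ref{gv1}) you describe the edge set of $\Gamma_p(\A)$ using the $k>2$ definition, whereas for $p=2$ the paper's definition also admits edges of multiplicity exactly $2$ (the ``odd or equal to $2$'' condition); those extra edges are precisely the ones you already handle correctly in part~(\ref{gv2}) via $\eta_H+\eta_{H'}=0$ over $\FF_2$, so nothing of substance is missing.
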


\subsection{Intersection lattices}
\label{ss23} 

The Aomoto-Betti numbers for $\A(1,1,l)$ were computed in \cite{MP}.
They verify all statements from Theorems \ref{thm:b} and \ref{thm:a}. 
Hence, we may suppose from now on that $m\geq 2$.

We need to describe $\LL_{\leq 2}(\A)$. It will be convenient to label the various hyperplanes as follows.
Set $\omega= \exp (\frac{2\pi \sqrt{-1}}{m}),\ (H_i)\ z_i=0$ for all $1\leq i\leq l$,
and $(H_{ij^\alpha})\ z_i - \omega^\alpha z_j = 0$, for $1\leq i< j\leq l$ and $\alpha\in\Z/m\Z$.
We go on by listing the 2-flats (identified with the corresponding subarrangements $\A_X$).

\begin{description}
 \item[Case {\rm I}] $\A(m,1,l), l\geq 4:$
 \item[$\rm{I_a}$] \{$H_i, H_j, H_{ij^\alpha} (\alpha\in\Z/m\Z)$\}, with multiplicity $m+2$;
 \item[$\rm{I_b}$] \{$H_{ij^\alpha}, H_{jk^\beta}, H_{ik^{\alpha+\beta}}$\}, with multiplicity 3;
 \item[$\rm{I_c}$] \{$H_{ij^\alpha}, H_{kh^\beta}$\}, with multiplicity 2;
 \item[$\rm{I_d}$] \{$H_i, H_{jk^\alpha}$\}, with multiplicity 2.
 \item[Case {\rm II}] $\A(m,m,l), l\geq 4:$ types $\rm{I_b}$ and $\rm{I_c}$, plus \{$H_{ij^\alpha} (\alpha\in\Z/m\Z)$\},
 with multiplicity $m$.
 \item[Case {\rm III}] $\A(m,1,3):$ types $\rm{I_a}$, $\rm{I_b}$ and $\rm{I_d}$.
 \item[Case {\rm IV}] $\A(m,m,3):$ types $\rm{I_b}$ and $\rm{II}$.
\end{description}

\subsection{$\beta_p$-vanishing}
\label{ss24} 

We will use Lemma \ref{lem:graphv} to treat the cases when $\beta_p(\A) = 0$ 
in the non-exceptional families.
To simplify things, we suppress $H$ from notation and identify the hyperplanes 
with their labels, $i$ and $ij^\alpha$.

We claim that, for $\A = \A(m,1,l)$ with $l\geq 4, \Gamma_{(2)}(\A)$ is connected. Indeed, given $i<j$ we may find $h<k$
with $i,j,h,k$ distinct. Hence, $i\sim_{(2)} hk^0\sim_{(2)} j$ and $ij^\alpha\sim_{(2)} k$, which proves connectivity.
Similar arguments lead to the following conclusions. If $\A = \A(m,1,l)$ with $l=3$ and $p\neq 3$, then $\Gamma_p(\A)$
is connected; for $p = 3$ and $m\nequiv 1\ (\bmod\ 3), \Gamma_3(\A)$ is connected. 
The remaining full monomial Aomoto-Betti numbers,
$\beta_3(m,1,3)$ with $m\equiv 1\ (\bmod\ 3)$, will be computed later on.

If $\A = \A(m,m,l)$ with $l\geq 5$, then $\Gamma_{(2)}(\A)$ is connected. 
For $l = 3,4$ and $p\neq 3, \Gamma_p(\A)$
is connected. So, for monomial arrangements, only $\beta_3$ in ranks 3 and 4 remains to be calculated.

\subsection{The remaining non-exceptional cases}
\label{ss25}

A $\bmod\ 3$ cocycle $\eta\in Z_3(\A)$ is a family of elements of $\FF_3$, $\eta_k$ and $\eta_{ij^\alpha}$,
satisfying the equations from Lemma \ref{lem:zeq}, for any $X\in\LL_2(\A)$.

{\bf Case} $\A = \A(m,1,3)$ with $m\equiv 1 (\bmod\ 3)$.

The equations coming from 2-flats of type $\rm{I_d}$ say that
$\eta_{jk^\alpha} = \eta_i$, where $i$ is the third element of $\{1,2,3\}$.
The equations of type $\rm{I_b}$ become equivalent to $\eta_1+\eta_2+\eta_3 = 0$, while type $\rm{I_a}$
equations say that $\eta_i+\eta_j+m\eta_k = 0$, for all $i\neq j\neq k$. We infer that $\beta_3(m,1,3) = 1$,
as asserted in Theorem \ref{thm:b}.

{\bf Case} $\A = \A(m,m,4)$.

The equations of type $\rm{I_c}$ say that $\eta_{ij^\alpha} = \eta_{ij}$,
and $\eta_{34} = \eta_{12}, \eta_{24} = \eta_{13}, \eta_{23} = \eta_{14}$.
Type $\rm{I_b}$ equations reduce then to $\eta_{12}+\eta_{13}+\eta_{14} = 0$, 
while type II conditions follow from
$\eta_{ij^\alpha} = \eta_{ij}$. Again, $\beta_3(m,m,4) = 1$, as claimed.

{\bf Case} $\A = \A(m,m,3)$ with $m\nequiv 0 (\bmod\ 3)$.

The equations of type II say that $\eta_{ij^\alpha} = \eta_{ij}$, and the type $\rm{I_b}$ conditions
then reduce to $\eta_{12}+\eta_{23}+\eta_{13} = 0$. This shows that $\beta_3(m,m,3) = 1$, as asserted.

{\bf Case} $\A = \A(m,m,3)$ with $m = 3n$.

Set $\eta_{12^\alpha} = a_{\alpha}, \eta_{23^\alpha} = b_{\alpha}, \eta_{13^\alpha} = c_{\alpha}$.
With this notation, the equations of type $\rm{I_b}$ are equivalent to the system
\begin{equation}
\label{eq:neteq}
a_{\alpha}+b_{\beta}+c_{\alpha + \beta} = 0,\forall\ \alpha ,\beta\in\Z/m\Z,
\end{equation}
while the conditions of type II read
\begin{equation}
\label{eq:block}
\sum a_\alpha =\sum b_\beta =\sum c_\gamma = 0.
\end{equation}

We first solve the system \eqref{eq:neteq}, as follows. It implies that 
$a_\alpha + b_\beta = a_{\alpha '} + b_{\beta '}$,
if $\alpha + \beta = \alpha ' + \beta '$, in particular 
$a_\alpha - a_0 = b_\alpha  - b_0 \coloneq d_\alpha$, for all $\alpha$, and
$d_\alpha + d_\beta = d_{\alpha '} + d_{\beta '}$, if $\alpha + \beta = \alpha ' + \beta '$. 
We infer that $d_\alpha = \alpha d_1$,
for all $\alpha$. Hence $a_\alpha = a_0 + \alpha d_1, b_\alpha = b_0 + \alpha d_1$ 
and $c_\alpha = -a_0 - b_0 - \alpha d_1$,
which solves the system \eqref{eq:neteq}. In particular, its solution space is 3-dimensional.

Finally, it is an easy matter to check that \eqref{eq:neteq} $\Rightarrow$ \eqref{eq:block}, since $m = 3n$.
Therefore, $\beta_3(3n,3n,3) = 2$, as asserted.

This proves Theorem \ref{thm:b} for non-exceptional reflection arrangements.

\section{Exceptional reflection arrangements}
\label{sec:exc}

We finish the proof of Theorem \ref{thm:b}, by computing the 
Aomoto-Betti numbers of the exceptional complex reflection
arrangements of rank at least 3, $G_{23} - G_{37}$.

\subsection{The groups $G_{31},\ G_{32},\ G_{33}$}
\label{ss31} 

{\bf Case} $\A = \A(G_{31})$. The hyperplanes of $\A$ live in $\C^4$. Their defining equations are as follows
(see \cite{HR}). Set $\omega= \exp (\frac{2\pi \sqrt{-1}}{4})$. 
The hyperplanes of $\A$ are: 
\begin{itemize}
\item $(H_i)\ z_i = 0\ (1\leq i\leq 4)$;
\item $(H_{ij^\beta})\ z_i -\omega^\beta z_j = 0\ (1\leq i< j\leq 4,\ \beta\in\Z/4\Z)$;
\item $(H_{\overline\alpha})\ z_1 + \sum_{2\leq i\leq 4} \omega^{\alpha_i}z_i = 0$
$(\overline\alpha = (\alpha_2,\alpha_3,\alpha_4)\in(\Z/4\Z)^3, \alpha_2 + \alpha_3 + \alpha_4\equiv 0\ (\bmod\ 2))$.
\end{itemize}

By Lemma \ref{lem:graphv}(\ref{gv2}), it is enough to show that $\Gamma_{(2)}(G_{31})$ is connected.
This can be seen as follows. Clearly, the 2-flat $H_k\cap H_{ij^\beta}$ has multiplicity 2, when $i\neq j\neq k$.
This implies that $i\sim_{(2)} j\sim_{(2)} kh^{\beta}$, for all $1\leq i<j\leq 4,\ 1\leq k<h\leq 4$ and
$\beta\in\Z/4\Z$. Given any $H_{\overline\alpha}$, it is not hard to see that the multiplicity of
$H_{\overline\alpha}\cap H_{12^{\alpha_2}}$ is 2. This proves connectivity, as claimed.

{\bf Case} $\A = \A(G_{32})$. Set $\omega= \exp (\frac{2\pi \sqrt{-1}}{3})$. The arrangement $\A$
consists of the following hyperplanes in $\C^4$ (see \cite{Sz}): 
\begin{itemize}
\item $(H_i)\ z_i = 0\ (1\leq i\leq 4)$;
\item $(H_{1^{\alpha\beta}})\ z_2 + \omega^\alpha z_3 + \omega^\beta z_4 = 0$;
\item $(H_{2^{\alpha\beta}})\ z_1 + \omega^\alpha z_3 - \omega^\beta z_4 = 0$;
\item $(H_{3^{\alpha\beta}})\ z_1 - \omega^\alpha z_2 + \omega^\beta z_4 = 0$;
\item $(H_{4^{\alpha\beta}})\ z_1 + \omega^\alpha z_2 - \omega^\beta z_3 = 0$ ($\alpha,\beta\in\Z/3\Z$).
\end{itemize}
Clearly, the 2-flats $H_i\cap H_j\ (i\neq j)$ and $H_i\cap H_{i^{\alpha\beta}}$ have multiplicity 2.
This shows that $\Gamma_{(2)}(G_{32})$ is connected and we are done.

{\bf Case} $\A = \A(G_{33})$. Here, $\omega= \exp (\frac{2\pi \sqrt{-1}}{3})$ and the hyperplanes (in $\C^6$)
are as follows (see \cite{C,R}): 
\begin{itemize}
\item $(H_{ij^\beta})\ z_i - \omega^\beta z_j = 0\ (1\leq i<j\leq 4,\ \beta\in\Z/3\Z)$;
\item $(H_{\overline\alpha})\ \sum_{1\leq i\leq 4} \omega^{\alpha_i}z_i + z_5 + z_6 = 0$
$\ (\overline\alpha = (\alpha_1,\alpha_2,\alpha_3,\alpha_4)\in(\Z/3\Z)^4,\ \sum\alpha_i = 0)$.
\end{itemize}
Plainly, $ij^{\beta}\sim_{(2)}kh^0\sim_{(2)}ij^{\beta '}$, for all $\beta, \beta '$ (where $i\neq j\neq k\neq h$).
Like in the  Case $G = G_{31}$, it can be checked that
$H_{\overline\alpha}\cap H_{ij^\beta}$ has multiplicity 2, if $\beta = \alpha_j - \alpha_i$.
We infer that $\Gamma_{(2)}(G_{33})$ is connected, and we are done.

\subsection{More vanishing criteria}
\label{ss32} 

We will no longer need defining equations to settle the remaining cases. 
We will use instead a couple of new vanishing arguments.

For the beginning, let us recall from \cite[pp.~224-225]{OT} the following very useful properties of reflection
groups and arrangements, derived from a key result of Steinberg \cite{S}. For any complex reflection group $G$ and any
$X\in\LL_r(\A(G))$, the fixer subgroup $G_X \coloneq \{g\in G\ |\ gx = x,\ \forall x\in X\}$
is a reflection group, and $\A(G)_X = \A(G_X)$ is again a reflection arrangement, of rank $r$.
By the construction of $\A(G)$, the group $G$ acts on the arrangement $\A(G)$, hence on the intersection lattice
$\LL_\bullet(G)$. Let us denote by $\OO_X$ the $G$-orbit of $X\in\LL(G)$. Let $\Type (X)$ be the isomorphism type
of the reflection group $G_X$. It follows from \cite[Lemma 6.88]{OT} that the type is constant on each orbit $\OO_X$.
Moreover, Table C from \cite{OT} gives $|\A(G)|$ and the orbit partition of $\LL_\bullet(G)$ for all exceptional groups,
in terms of types of orbits.

This leads to the quick computation of the sets
\[
\pP(G) := \{ p\ \text{prime}\ |\ \exists\ X\in\LL_2(G)\ \text{such that}\ |\A(G_X)|\equiv 0\ (\bmod\ p)\}
\]

In particular, $\pP(G)\subseteq \{2,3,5\}$, for every exceptional arrangement of rank at least 3.
We infer from Lemma \ref{lem:graphv}(\ref{gv3}) that $\beta_p(G) = 0$, if $p>5$.
Hence, we may suppose from now on that $p\leq 5$.

For an arbitrary arrangement $\A, H\in\A$ and a prime $p$, we define
\begin{equation}
\label{eq:multp}
m_p(H) = 1 + \sum(|\A_X| - 1),
\end{equation}
where the sum is taken over those $X\in\LL_2(\A)$ such that $X\subseteq H$
and $|\A_X|\nequiv 0\ (\bmod\ p)$.

The numbers from \eqref{eq:multp} may be extracted from Table C in \cite{OT},
for exceptional reflection arrangements of rank at least 3. This is based on the following fact, valid for an
arbitrary reflection group $G$. For $X, Y\in\LL(G)$, let $u(X,Y)$ be the number of $Z\in\LL(G)$ such that
$Z\in\OO_Y$ and $Z\subseteq X$. Clearly, this number depends only on $\OO_X$ and $\OO_Y$.
The values $u(H,X)$ may be found in Table C, for all orbit types corresponding to $H\in\LL_1(G)$
and $X\in\LL_2(G)$.

\begin{lemma}
\label{lem:frac} For an arrangement $\A$ and a prime $p$, the following hold.
 \begin{enumerate}
 \item\label{f12} If $m_p(H)>\frac{|\A|}{2}$ for all $H\in\A$, then $\beta_p(\A) = 0.$
 \item\label{f13} If $m_p(H)>\frac{|\A|}{3}$ for all $H\in\A$ and $\A$ has no rank 2 flats of
 multiplicity $p\cdot r$ with $r>1$, then $\beta_p(\A) = 0$.
 \end{enumerate}
\end{lemma}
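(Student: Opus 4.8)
The plan is to reinterpret the quantity $m_p(H)$ from \eqref{eq:multp} as (essentially) the degree of the vertex $H$ in the graph $\Gamma_p(\A)$, and then to cap the number of connected components of $\Gamma_p(\A)$ by a minimum-degree argument. Indeed, for a fixed $H\in\A$, grouping the hyperplanes $H'\neq H$ according to the $2$-flat $X=H\cap H'$ they determine shows that the neighbours of $H$ in $\Gamma_p(\A)$ arising from flats with $|\A_X|\nequiv 0\ (\bmod\ p)$ number exactly $\sum(|\A_X|-1)=m_p(H)-1$, the sum being over those $X\in\LL_2(\A)$ with $X\subseteq H$ and $p\notdivides|\A_X|$. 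Hence $\deg_{\Gamma_p}(H)\geq m_p(H)-1$, with equality when $p$ is odd (for $p=2$ the graph $\Gamma_2(\A)$ carries additional multiplicity-$2$ edges). Since every connected component of $\Gamma_p(\A)$ containing $H$ has at least $\deg_{\Gamma_p}(H)+1\geq m_p(H)$ vertices, a lower bound on $m_p(H)$ valid for all $H$ immediately limits the number of components.

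For part \eqref{f12}, the hypothesis $m_p(H)>|\A|/2$ forces every component to have more than $|\A|/2$ vertices, so there can be only one: $\Gamma_p(\A)$ is connected, and $\beta_p(\A)=0$ by Lemma~\ref{lem:graphv}(\ref{gv1}).

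For part \eqref{f13}, the weaker hypothesis $m_p(H)>|\A|/3$ only guarantees that each component has more than $|\A|/3$ vertices, hence that $\Gamma_p(\A)$ has at most two components. If it is connected we again conclude via Lemma~\ref{lem:graphv}(\ref{gv1}). Otherwise $\Gamma_p(\A)$ splits into exactly two components $C_1,C_2$, and by Lemma~\ref{lem:zeq} any $\eta\in Z_p(\A)$ is constant on each, say $\eta\equiv a$ on $C_1$ and $\eta\equiv b$ on $C_2$; it remains to prove $a=b$. Here I would invoke the cocycle conditions attached to the $2$-flats of multiplicity divisible by $p$. Picking $H\in C_1$ and $H'\in C_2$, the flat $X=H\cap H'$ has $|\A_X|\equiv 0\ (\bmod\ p)$ (since $H,H'$ lie in different components), so the extra hypothesis—no rank-$2$ flat of multiplicity $p\cdot r$ with $r>1$—forces $|\A_X|=p$ exactly. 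As $X$ meets both $C_1$ and $C_2$, writing $s$ for the number of hyperplanes of $\A_X$ lying in $C_1$ we have $1\leq s\leq p-1$, and the relation $\sum_{H''\in\A_X}\eta_{H''}=0$ reads $s(a-b)\equiv 0\ (\bmod\ p)$; invertibility of $s$ modulo $p$ yields $a=b$. Thus $\eta$ is constant and $\beta_p(\A)=0$.

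The crux is this last step of part \eqref{f13}: the component count alone does not give connectivity, and it is precisely the multiplicity hypothesis that makes the straddling flat have multiplicity exactly $p$, so that the single cocycle relation it contributes has an invertible coefficient and collapses the two constants. I expect the only real bookkeeping to concern the prime $p=2$, where the definition of $\Gamma_2(\A)$ includes the multiplicity-$2$ edges; there the hypothesis of part \eqref{f13} forces every $2$-flat to have odd multiplicity or multiplicity $2$, making $\Gamma_2(\A)$ complete and the statement immediate, so the substantive content lies with odd $p$.
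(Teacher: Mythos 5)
Your proof is correct and is essentially the paper's own argument in graph-theoretic dress: the count $\deg_{\Gamma_p}(H)\ge m_p(H)-1$ is exactly the paper's observation that a cocycle $\eta\in Z_p(\A)$ must take the value $\eta_H$ on at least $m_p(H)$ hyperplanes, and your final step for part (\ref{f13}) --- the straddling flat has multiplicity exactly $p$, so the relation $\sum_{K\in\A_X}\eta_K=0$ carries an invertible coefficient $s$ --- is precisely the paper's concluding contradiction. The only cosmetic difference is that you bound the number of connected components of $\Gamma_p(\A)$ where the paper bounds the number of values of $\eta$; these amount to the same thing since $\eta$ is constant on each component.
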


\begin{proof} 
If $\beta_p(\A)\neq 0$, there is a non-constant function $\eta\in\FF_p^\A$
satisfying all equations from Lemma \ref{lem:zeq}. Fix $H\in\A$ and set $\eta_H = \alpha$.
We claim that $|\{\eta = \alpha\}|\geq m_p(H)$. Indeed, if $X\in\LL_2(\A)$ is contained in $H$
and $|\A_X|\nequiv 0\ (\bmod\ p)$, then $\eta$ must have the constant value $\alpha$ on $\A_X$.
An easy count of all these hyperplanes gives the claimed inequality.

In Part (\ref{f12}), this implies that $\eta$ must be constant, a contradiction. In Part (\ref{f13}),
we infer that $\eta$ has only two distinct values. By adding constant functions 
and multiplying by non-zero elements in
$\FF_p$, we may assume that these values are $\eta_H = 0$ and $\eta_{H '}= 1$.
By Lemma \ref{lem:zeq}, the flat $X = H\cap H'$ has multiplicity $p\cdot r$, 
imposing the condition $\sum_{K\in\A_X} \eta_K = 0$.
Since necessarily $r = 1$, we arrive again at a contradiction.
\end{proof}

\subsection{Induction on rank}
\label{ss33}

We start with a couple of general considerations. A subarrangement $\B\subseteq\A$ is called
{\em line-closed in} $\A$ (see the first definition from \cite[Definition 1.1]{F02}) 
if $\B_X = \A_X$, for all $X\in\LL_2(\B)$. This property implies that the
restriction map, $\FF_p^\A\longrightarrow\FF_p^\B$, sends $Z_p(\A)$ into $Z_p(\B)$.
Clearly, $\A_Y$ is line-closed in $\A$, for any $Y\in\LL (\A)$.

\begin{lemma}
\label{lem:indr} Let G be a complex reflection group. Assume that $2\leq r<\rank(\A(G))$ and
$\beta_p(\A(G')) = 0$, for all irreducible groups $G'\in \Type(\LL_r(\A(G))).$ Then $\beta_p(\A(G)) = 0$.
\end{lemma}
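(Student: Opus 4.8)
The plan is to use the criterion, recalled just before Lemma~\ref{lem:zeq}, that $\beta_p(\A(G))=0$ exactly when every cocycle $\eta\in Z_p(\A(G))$ is constant. So I would fix an arbitrary $\eta\in Z_p(\A(G))$ and aim to prove $\eta_H=\eta_{H'}$ for all $H,H'\in\A(G)$. The whole argument is a descent to the rank-$r$ localizations $\A(G)_X$, $X\in\LL_r(\A(G))$. By the Steinberg-type facts recalled in \S\ref{ss32}, each such localization is the reflection arrangement $\A(G_X)$ of the fixer $G_X$, a reflection group of rank $r$; and being of the form $\A(G)_X$ with $X\in\LL(\A(G))$, it is line-closed in $\A(G)$. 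Hence the restriction map $\FF_p^{\A(G)}\to\FF_p^{\A(G_X)}$ carries $Z_p(\A(G))$ into $Z_p(\A(G_X))$, so that $\eta|_{\A(G_X)}\in Z_p(\A(G_X))$ for every $X\in\LL_r(\A(G))$.

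Next I would show $\beta_p(\A(G_X))=0$ for each $X\in\LL_r(\A(G))$, which forces $\eta$ to be constant on every such localization. Here I distinguish two cases according to the type $\Type(X)$ of the rank-$r$ group $G_X$. If $G_X$ is irreducible, then $G_X\in\Type(\LL_r(\A(G)))$ is one of the groups covered by the hypothesis, so $\beta_p(\A(G_X))=0$ directly. If $G_X$ is reducible, it splits as a product of at least two irreducible reflection groups, so $\A(G_X)$ is a nontrivial product of reflection arrangements, and Lemma~\ref{lem:prod}(\ref{p2}) gives $\beta_p(\A(G_X))=0$ with no further information on the factors. (The point to note is precisely that the irreducible factors of a reducible $G_X$ have rank $<r$ and so need \emph{not} lie in $\Type(\LL_r(\A(G)))$; it is Lemma~\ref{lem:prod} that absorbs them.) In either case $Z_p(\A(G_X))$ consists only of constant functions, so $\eta|_{\A(G_X)}$ is constant.

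It remains to globalize, i.e. to pass from ``$\eta$ constant on each rank-$r$ localization'' to ``$\eta$ constant on $\A(G)$''. For this I would show that any two distinct hyperplanes $H,H'\in\A(G)$ already lie in a common $\A(G_X)$ with $X\in\LL_r(\A(G))$. Their intersection $Y=H\cap H'$ is a flat of codimension $2$; the center of $\A(G)$ is a flat of codimension $\rank(\A(G))>r$ contained in $Y$; and in the geometric intersection lattice $\LL_\bullet(\A(G))$ a maximal chain of flats joining $Y$ to the center visits a flat of each intermediate codimension, in particular a flat $X$ of codimension exactly $r$ (when $r=2$ one simply takes $X=Y$). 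Since $X\subseteq Y\subseteq H,H'$, both $H$ and $H'$ belong to $\A(G)_X=\A(G_X)$, and the previous step yields $\eta_H=\eta_{H'}$. As $H,H'$ were arbitrary, $\eta$ is constant, whence $\beta_p(\A(G))=0$.

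The routine parts are the case distinction and the restriction statement, both already prepared in the text. The one step I expect to deserve genuine care is the globalization: it hinges on the existence, guaranteed by the strict inequality $r<\rank(\A(G))$ together with the geometric-lattice structure of $\LL_\bullet(\A(G))$, of a codimension-$r$ flat beneath the codimension-$2$ flat $H\cap H'$. Once this is in place the induction closes at once, and I anticipate no serious obstacle beyond correctly invoking Lemma~\ref{lem:prod} to handle the reducible fixers.
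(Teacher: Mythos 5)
Your proposal is correct and is essentially the paper's argument, merely run directly rather than by contradiction: both rest on localizing at a rank-$r$ flat $X$ lying below a given rank-$2$ flat $H\cap H'$ (guaranteed by $r<\rank(\A(G))$), the line-closedness of $\A(G)_X=\A(G_X)$, the hypothesis for irreducible fixers, and Lemma~\ref{lem:prod} for reducible ones. No gaps.
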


\begin{proof} 
Assuming the contrary, there exist $\eta\in Z_p(G)$ and $H_1,\ H_2\in \A(G)$ such that
$\eta_{H_1}\neq\eta_{H_2}$. From our assumption on $r$, we may find $H_3,\dots ,\ H_r\in\A(G)$ such that
$X = H_1\cap H_2\cdots\cap H_r\in\LL_r (G)$. Set $\B = \A(G)_X = \A(G_X)$. We deduce that $\beta_p(G_X)\neq 0$.
If $G_X$ is reducible, this contradicts Lemma \ref{lem:prod}. Otherwise, our second assumption is violated.
\end{proof}

\subsection{The rank 3 case}
\label{ss34}

The rank 3 exceptional groups are $H_3, G_{24}, G_{25}, G_{26}$ and $G_{27}$. Table C from \cite{OT} provides
the following information on each group:

\begin{itemize}
 \item $\pP(G) = \{2,3,5\};\ \{2,3\};\ \{2\};\ \{2,5\};\ \{2,3,5\};$
 \item $|\A(G)|$ = 15; 21; 12; 21; 45.
\end{itemize}
By Lemma \ref{lem:yuzv} and Lemma \ref{lem:graphv}(\ref{gv3}), the only 
primes $p$ which might give $\beta_p(G)\neq 0$ are as follows:
$$ 3,5\ (H_3);\ 3\ (G_{24});\ 2\ (G_{25});\ -\ (G_{26});\ 3,5\ (G_{27}).$$ 

Using Lemma \ref{lem:frac}(\ref{f12}), we obtain
$\beta_3(H_3) = \beta_3(G_{24}) = \beta_3(G_{27}) = \beta_5(G_{27}) =0$, 
and Lemma \ref{lem:frac}(\ref{f13}) gives $\beta_5(H_3) = 0$.
Finally, $\beta_2(G_{25}) = 2$, cf. \cite{PS14}. Thus, Theorem \ref{thm:b} is proved in this case. Indeed, the Hessian arrangement
$\A(G_{25})$ supports a 4-net (see e.g. \cite{Yuz04}). This implies that $\beta_2\neq 0$, by \cite{PS14}; the other implications
from Theorem \ref{thm:b}(\ref{b2}) are obvious.

Applying Lemma \ref{lem:indr} for $r = 3$ and $p = 5$, we also infer that Theorem \ref{thm:b}(\ref{b1}) holds
for all complex reflection arrangements of rank at least $3$. Thus, we only need to show that $\beta_2(G) = \beta_3(G) = 0$,
when $G$ is exceptional of rank at least 4, in order to complete the proof of Theorem \ref{thm:b}.

\subsection{The remaining cases}
\label{ss35}

The only rank 5 exceptional arrangement is $G_{33}$, for which we know from \S\ref{ss31} that all $\beta_p$ vanish.
By the computations from Section \ref{sec:fam}, the same thing happens for non-exceptional irreducible arrangements of rank 5.
Lemma \ref{lem:indr}, applied for $r = 5$, guarantees then that we may reduce our proof to the rank 4 case. Here, the list is
$G = F_4, G_{29}, H_4, G_{31}, G_{32}$, and the last two groups were treated in \S\ref{ss31}.

{\bf Case} $G = F_4$. The irreducible rank 3 types are listed in Table C from \cite{OT}: $B_3$ and $C_3$; 
in both cases, the arrangement
$\A(G')$ is $\A(2,1,3)$, for which all $\beta_p$ vanish, cf. Section \ref{sec:fam}. We may conclude by resorting to
Lemma \ref{lem:indr} for $r = 3$.

{\bf Case} $G = G_{29}$. The list of irreducible rank 3 types is: $G' = A_3, B_3, G(4,4,3)$. Taking $r = 3$ and $p = 2$
in Lemma \ref{lem:indr}, we deduce from Section \ref{sec:fam} that $\beta_2(G_{29}) = 0$. Since $|\A(G_{29})| = 40$,
$\beta_3(G_{29}) = 0$, by Lemma \ref{lem:yuzv}.

{\bf Case} $G = H_4$. The irreducible types of $\LL_3(G)$ are: $G' = A_3, H_3$. Again 
by Lemma \ref{lem:indr} and previous computations,
$\beta_2(H_4) = 0$. Finally, $\beta_3(H_4) = 0$, as follows from Lemma \ref{lem:frac}(\ref{f12}). 

The proof of Theorem \ref{thm:b} is complete.

\section{Multinets and jump loci}
\label{sec:multi}

In this section we prove Theorems \ref{thm:a} and \ref{thm:c}. 
Along the way, we establish a useful general result that relates
combinatorial structures on arrangements satisfying the main multinet axiom to the algebraic monodromy 
of its Milnor fibration and its Aomoto-Betti numbers.

\subsection{Multinets and weighted partitions}
\label{ss41}

The work of Falk and Yuzvinsky from \cite{FY} gives, among other things, 
a description of the resonance variety of an arrangement $\A$,
$\RR^1_1(\A, \C)$, in terms of {\it multinets} on the associated matroid.
A $k-$multinet on $\LL_{\leq 2}(\A)$ is a partition $\Pi$ with $k\geq 3$
non-empty blocks, $\A= \bigsqcup _{\alpha \in [k]} \A_{\alpha}$,
together with a function, $\mathfrak{m}: \A \rightarrow \Z_{> 0}$,
satisfying certain axioms. The most important is the following:

For any $H \in \A_{\alpha}$ and $H' \in \A_{\beta}$ with $\alpha \neq \beta$, and every $\gamma \in [k]$,
\begin{equation}
\label{eq:netax}
n_X  \coloneqq \sum_{K \in \A_{\gamma} \cap \A_X} \mathfrak{m}_K
\end{equation}

is independent of $\gamma$, where $X= H \cap H' \in \LL_2(\A)$.

Let $\Pi$ be a partition of $\A$ with $k\geq 3$ non-empty blocks, as above. Let $\mathfrak{m}: \A \rightarrow \Z$
be an assignment of arbitrary integer weights to the hyperplanes of $\A$. 

\begin{definition}
\label{def=cpart}
The pair $\mathcal{N}=(\Pi, \mathfrak{m})$ is a {\em weighted $k$-partition} if axiom \eqref{eq:netax}
is satisfied. We will say that $\mathcal{N}$ is {\em $h$-reduced} $(h \geq 1)$ if $\mathfrak{m}_K \equiv 1 (\bmod\; h)$,
for all $K\in \A$. 
\end{definition}

The underlying partition of a $k$-multinet, together with its positive weight function, is a weighted $k$-partition.
Moreover, the usual notion of reduced multinet corresponds to $h=1$.

 We will need a result from \cite{PS14} related to axiom \eqref{eq:netax}.
 To recollect it, we start with a few notations.
 Set $H_{\A} \coloneqq H_1(M_{\A}, \Z)$ and denote by $\{a_H\}_{H \in \A}$ the distinguished $\Z$-basis. Let $S$
be  $\C \PP^1 \setminus \{k \; \text{points}\}$ and set
$H_S \coloneqq H_1 (S, \Z)=\Z-\text{span}\langle c_\alpha | \alpha \in [k]\rangle \slash \sum _{\alpha \in [k]} c_\alpha$,
where $c_{\alpha}$ is the class of a small loop in $S$ around the point $\alpha$.

Let $\cup_{\A}: \bigwedge^2 H^1(M_{\A}, \Z) \rightarrow H^2(M_{\A}, \Z)$ be the cup product.
Recalling from \cite{OS} that $H^\bullet (M_{\A}, \Z)$ has no torsion,
we denote by $\nabla_{\A}: H_2(M_{\A}, \Z) \rightarrow \bigwedge^2 H_{\A}$ the $\Z$-dual comultiplication map.

The next result improves Theorem 2.4 from \cite{FY}, in several ways. The hypothesis of Proposition \ref{prop:multilie} 
is reduced to the key axiom $(iii)$ from \cite[Definition 2.1]{FY}. The weight function $\mathfrak{m}$ may take arbitrary 
integer values, while in \cite{FY} positivity plays a crucial role. The conclusion in \cite{FY} is that 
$\im(\phi \otimes \k)^* \subseteq H^1(M_{\A}, \k)$ is an isotropic subspace, for a characteristic $0$ field $\k$
(in positive characteristic, an additional condition on $\mathfrak{m}$ is needed in \cite{FY}), while  
Proposition \ref{prop:multilie} gives the conclusion over $\Z$. For the reader's convenience, we include the proof.

\begin{prop}[\cite{PS14}]
\label{prop:multilie} 
Let $\mathcal{N}=(\Pi, \mathfrak{m})$ be a weighted $k$-partition. 
Then $\bigwedge^2  \phi  \circ \nabla _{\A}=0$,
where $\phi : H_{\A} \rightarrow H_S$ sends $a_H$ to $ \mathfrak{m}_H c_{\alpha}$, for $H \in \A_{\alpha}$.
Therefore, $\cup_{\A} \circ \bigwedge^2 \phi ^*=0$, by taking $\Z$-duals.
\end{prop}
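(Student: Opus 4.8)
The plan is to prove the first identity $\bigwedge^2\phi\circ\nabla_\A=0$ directly; the second then follows formally, since all the groups in sight are free (Orlik--Solomon, \cite{OS}), so dualizing is exact and $(\bigwedge^2\phi\circ\nabla_\A)^*=\nabla_\A^*\circ(\bigwedge^2\phi)^*=\cup_\A\circ\bigwedge^2\phi^*$, whence one map vanishes iff the other does. Geometrically $\phi$ wants to be the map on $H_1$ induced by an orbifold pencil $M_\A\to S$, and the identity would be immediate from $H^2(S)=0$; but axiom \eqref{eq:netax} is imposed only at the combinatorial level (on the matroid), with no actual map available, so I would instead establish the identity by a direct computation inside the Orlik--Solomon algebra.

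First I would pin down the image of the comultiplication. By the Brieskorn decomposition $H_2(M_\A,\Z)=\bigoplus_{X\in\LL_2(\A)}H_2(M_{\A_X},\Z)$ and its naturality, $\nabla_\A$ is assembled from the local comultiplications $\nabla_{\A_X}$, whose images land in $\bigwedge^2\spn\{a_H\mid H\in\A_X\}\subseteq\bigwedge^2 H_\A$. Dualizing the degree-two Orlik--Solomon relations attached to a rank-two flat $X$ (one for each triple of hyperplanes through $X$), one checks that the annihilator of the relation space consists precisely of the antisymmetric ``coboundaries'', and hence that $\im\nabla_{\A_X}$ is spanned by the classes $s_X\wedge a_H$ with $H\in\A_X$, where $s_X\coloneqq\sum_{H'\in\A_X}a_{H'}$ is the sum of all meridians through $X$. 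Consequently $\im\nabla_\A$ is spanned by these $s_X\wedge a_H$, and it suffices to show $\bigwedge^2\phi(s_X\wedge a_H)=\phi(s_X)\wedge\phi(a_H)=0$ for every rank-two flat $X$ and every $H\in\A_X$.

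The computation of $\phi(s_X)$ is where axiom \eqref{eq:netax} enters. Writing $\alpha(H)$ for the block containing $H$, the definition of $\phi$ gives $\phi(s_X)=\sum_{H'\in\A_X}\mathfrak{m}_{H'}c_{\alpha(H')}=\sum_{\gamma\in[k]}n_X^{(\gamma)}c_\gamma$, where $n_X^{(\gamma)}=\sum_{K\in\A_\gamma\cap\A_X}\mathfrak{m}_K$. If $X$ meets at least two blocks, then \eqref{eq:netax} makes $n_X^{(\gamma)}=n_X$ independent of $\gamma\in[k]$, so $\phi(s_X)=n_X\sum_{\gamma}c_\gamma=0$ in $H_S$, because $\sum_\gamma c_\gamma=0$ there by construction; hence $\phi(s_X)\wedge\phi(a_H)=0$. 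If instead every hyperplane through $X$ lies in a single block $\gamma_0$, then $\phi$ maps $\spn\{a_H\mid H\in\A_X\}$ into $\Z c_{\gamma_0}$, so both $\phi(s_X)$ and $\phi(a_H)$ are multiples of $c_{\gamma_0}$ and their wedge again vanishes. Either way $\bigwedge^2\phi$ kills every generator $s_X\wedge a_H$, giving $\bigwedge^2\phi\circ\nabla_\A=0$.

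The main obstacle is the first step: correctly identifying $\im\nabla_\A$, i.e.\ translating the dual of the Orlik--Solomon presentation into the concrete generators $s_X\wedge a_H$. Once that is in hand, the role of the multinet axiom becomes transparent --- it forces the single combinatorial vector $s_X$, the total meridian around each rank-two flat straddling several blocks, into $\ker\phi$, after which vanishing is automatic from the relation $\sum_\gamma c_\gamma=0$ in $H_S$. I would take some care over the torsion-freeness invoked in both the Brieskorn splitting and the final dualization, and over the bookkeeping that separates flats meeting several blocks from those contained in a single block.
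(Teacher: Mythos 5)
This proposition is imported into the paper from \cite{PS14} without proof, so there is no in-paper argument to compare against; judged on its own, your proof is correct and is essentially the standard one. The two pillars both check out: (i) via the Brieskorn decomposition, $\im \nabla_\A$ is generated (already over $\Z$, as a short computation with the cocycle condition $x_{jk}-x_{ik}+x_{ij}=0$ confirms) by the elements $s_X\wedge a_H$ with $X\in\LL_2(\A)$, $H\in\A_X$, and $s_X=\sum_{H'\in\A_X}a_{H'}$; and (ii) axiom \eqref{eq:netax} says exactly that $\phi(s_X)$ is $n_X\sum_\gamma c_\gamma=0$ in $H_S$ for every multicolored flat, while for monochromatic flats $\phi$ maps $\spn\{a_H\mid H\in\A_X\}$ into the rank-one subgroup $\Z c_{\gamma_0}$, so $\bigwedge^2\phi$ kills every generator. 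Your handling of the dualization is also fine, since $H_\A$, $H_S$, and $H^\bullet(M_\A,\Z)$ are all finitely generated free abelian groups, so $(\bigwedge^2\phi\circ\nabla_\A)^*=\cup_\A\circ\bigwedge^2\phi^*$. One small point worth making explicit if you write this up: even if one only establishes that the $s_X\wedge a_H$ span $\im\nabla_\A$ up to finite index, the conclusion still follows because $\bigwedge^2 H_S$ is torsion-free.
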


\begin{proof}
Let $\h (\A)$ be the $\Z$-form of the holonomy Lie algebra of $\A$, appearing in Proposition 5.2 from \cite{PS14}.
By definition, this is the graded $\Z$-Lie algebra quotient of the free $\Z$-Lie algebra generated by $H_{\A}$,
$\L^{\hdot}(H_{\A})$, graded by bracket length, by the graded Lie ideal generated by 
$\im (\nabla _{\A}) \subseteq \bigwedge^2 H_{\A}$, where $\bigwedge^2 H_{\A}$ is identified with $\L^{2}(H_{\A})$
via the Lie bracket. Let $\L^{\hdot}(\phi) : \L^{\hdot}(H_{\A}) \to \L^{\hdot}(H_{S})$ be the graded $\Z$-Lie algebra 
map extending $\phi$. Our claim says that $\L^{\hdot}(\phi)$ factors through $\h (\A)$.

To check this, we recall from \cite[(30)]{PS14} that the defining Lie relations of $\h (\A)$ are:
\[
\sum_{K\in \A_X} [a_K, a_L] \, , \quad \text{for} \quad X\in \LL_2(\A) \quad \text{and} \quad L\in \A_X \, .
\]
Thus, we have to show that $[\sum_{K\in \A_X} \phi (a_K), \phi (a_L)]=0 \in \L^{2}(H_{S})$. 

There are two cases to consider. When $X$ is mono-coloured, i.e., $\A_X \subseteq \A_{\alpha}$
for some $\alpha\in [k]$, by construction $\phi (a_K)\in \Z \cdot c_{\alpha}$, for all $K\in \A_X$,
and we are done. Otherwise, $X= H \cap H'$, with  $H \in \A_{\alpha}$, $H' \in \A_{\beta}$ and $\alpha \neq \beta$.
Again by construction, 
$\sum_{K\in \A_X} \phi (a_K)= \sum_{\gamma \in [k]} (\sum_{K \in \A_{\gamma} \cap \A_X} \mathfrak{m}_K) c_{\gamma}$, 
which equals $n_X(\sum_{\gamma \in [k]} c_{\gamma})$, by axiom \eqref{eq:netax}. This implies that 
$\sum_{K\in \A_X} \phi (a_K)=0 \in \L^{1}(H_{S})$, by the definition of $H_{S}$, which completes the proof.
\end{proof}

\subsection{Relating weighted partitions to jump loci}
\label{ss42}

We are now ready to state our result, keeping the previous notation.

\begin{theorem}
\label{thm:d}
Assume that $\mathcal{N}=(\Pi, \mathfrak{m})$ is a $k$-reduced weighted $k$-partition.
Then the following hold, for all divisors $p, d$ of $k$ with $p$ prime and $d>1$.
\begin{enumerate}
\item \label{d1} $\rho_d(\A) \in \exp\ \RR_1^1(\A, \C)$, in particular $e_d(\A)>0$.
\item \label{d2} $\beta_p(\A) \neq 0$.
\end{enumerate}
\end{theorem}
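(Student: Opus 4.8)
The plan is to read both parts off the single linear map $\phi\colon H_{\A}\to H_S$, $a_H\mapsto \mathfrak{m}_H c_\alpha$ (for $H\in\A_\alpha$), exploiting that $\mathfrak{m}\equiv 1\ (\mathrm{mod}\ k)$ forces every multiplicity to be $\equiv 1$ both modulo $p$ and modulo $d$. First I would record the dual map over each coefficient field $\k$: for $\lambda\in\Hom(H_S,\k)$ one has $\phi^*(\lambda)_H=\mathfrak{m}_H\,\lambda(c_\alpha)$ when $H\in\A_\alpha$, and $\phi\otimes\Q$ is onto (each block is non-empty and $\mathfrak{m}_H\neq 0$, so every $c_\alpha$ is hit), whence $\phi^*$ is injective. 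By Proposition \ref{prop:multilie} the image $V=\im\phi^*\subseteq H^1(M_{\A},\k)$ is isotropic for the cup product, and $\dim V=\dim H^1(S,\k)=k-1\geq 2$.

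For part (\ref{d2}) I would produce an explicit non-constant element of $Z_p(\A)$ and conclude via $\beta_p(\A)=\dim_{\FF_p} Z_p(\A)-1$. Fix a tuple $(\mu_\alpha)_{\alpha\in[k]}\in\FF_p^k$ with $\sum_\alpha\mu_\alpha=0$ that is not constant (possible since $k-1\geq 2$), and set $\eta_H=\mu_\alpha$ for $H\in\A_\alpha$; this is exactly $\phi^*$ of the corresponding $\FF_p$-class, using $\mathfrak{m}_H\equiv 1\ (\mathrm{mod}\ p)$. To verify $\eta\in Z_p(\A)$ through Lemma \ref{lem:zeq}, I split the flats $X\in\LL_2(\A)$ into those whose hyperplanes lie in one block, where $\eta$ is constant on $\A_X$ and both alternatives of the lemma hold at once, and those meeting at least two blocks. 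For the latter, axiom \eqref{eq:netax} applied to a pair from distinct blocks makes $n_X$ independent of $\gamma$; reducing modulo $p$ and using $\mathfrak{m}\equiv 1$ gives $|\A_\gamma\cap\A_X|\equiv n_X\ (\mathrm{mod}\ p)$ for every $\gamma$, so $|\A_X|\equiv k\,n_X\equiv 0\ (\mathrm{mod}\ p)$ because $p\mid k$. Thus the lemma only asks that $\sum_{H\in\A_X}\eta_H=\sum_\gamma|\A_\gamma\cap\A_X|\mu_\gamma\equiv n_X\sum_\alpha\mu_\alpha=0$, which holds. Hence $\eta$ is a non-constant cocycle and $\beta_p(\A)\geq 1$.

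For part (\ref{d1}) I would first work at the torus level. The character $\chi\in\TT(S)$ with $\chi(c_\alpha)=\exp(2\pi\sqrt{-1}/d)$ for all $\alpha$ is well defined, since $\chi(\sum_\alpha c_\alpha)=\exp(2\pi\sqrt{-1}\,k/d)=1$ as $d\mid k$, and non-trivial because $d>1$. Its pullback satisfies $(\phi^*\chi)(a_H)=\chi(c_\alpha)^{\mathfrak{m}_H}=\exp(2\pi\sqrt{-1}/d)$, using $\mathfrak{m}_H\equiv 1\ (\mathrm{mod}\ d)$; that is, $\phi^*\chi=\rho_d(\A)$. Choosing any $\lambda\in H^1(S,\C)$ with $\exp\lambda=\chi$ (possible since $\exp$ is onto the torus) and setting $\sigma=\phi^*(\lambda)$, naturality of $\exp$ with respect to $\phi^*$ gives $\exp\sigma=\rho_d(\A)$, while $\sigma\neq 0$ since $\phi^*$ is injective and $\chi\neq 1$ forces $\lambda\neq 0$. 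As $\sigma$ is a non-zero vector of the isotropic subspace $V$ of dimension $\geq 2$, left multiplication $\sigma\cdot$ annihilates all of $V$, so $H^1(H^\bullet(M_{\A},\C),\sigma\cdot)\neq 0$ and $\sigma\in\RR_1^1(\A,\C)$; therefore $\rho_d(\A)\in\exp\ \RR_1^1(\A,\C)$.

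To deduce $e_d(\A)>0$ I would pass from resonance to the characteristic variety: the linear subspace $V\subseteq\RR_1^1(\A,\C)$ sits inside an irreducible component of $\RR_1^1(\A,\C)$, which for an arrangement complement is again linear and of dimension $\geq 2$, and whose image under $\exp$ is a subtorus contained in $\VV_1^1(M_{\A})$ by the resonance–characteristic dictionary. Since $\rho_d(\A)=\exp\sigma\in\exp V$, it lies in $\VV_1^1(M_{\A})$, and $e_d(\A)=\dim_\C H_1(M_{\A},\C_{\rho_d})>0$ by \eqref{eq:bmult}. I expect this last passage to be the main obstacle: part (\ref{d2}) and the resonance half of part (\ref{d1}) use only axiom \eqref{eq:netax} together with Proposition \ref{prop:multilie}, but upgrading $\rho_d(\A)\in\exp\ \RR_1^1$ to $\rho_d(\A)\in\VV_1^1$ genuinely invokes the deeper correspondence between the two jump loci (not merely the tangent-cone inclusion), and one must ensure that the ambient resonance component is positive-dimensional — which is guaranteed here precisely because $\dim V\geq 2$.
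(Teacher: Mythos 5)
Your proposal is correct and follows essentially the same route as the paper: both parts are read off the map $\phi$ and its dual, whose image is an isotropic subspace of dimension $k-1\geq 2$ by Proposition \ref{prop:multilie}, with $k$-reducedness guaranteeing that $\sigma_p(\A)$ and $\rho_d(\A)$ are pullbacks of the corresponding diagonal classes of $S$, and with the final step $e_d(\A)>0$ resting, exactly as in the paper, on the known inclusion $\exp\ \RR_1^1(\A,\C)\subseteq\VV_1^1(M_\A)$. The only cosmetic difference is that in part (\ref{d2}) you verify the cocycle conditions of Lemma \ref{lem:zeq} directly from axiom \eqref{eq:netax} rather than quoting the isotropy statement, which amounts to re-proving the relevant special case of Proposition \ref{prop:multilie}.
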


\begin{proof}
Part \eqref{d2}. Since the weighted partition is $k$-reduced and $p|k, \; \phi \otimes \k$ is surjective, by construction, 
for $\k=\FF_p$ and $\C$. It follows from Proposition \ref{prop:multilie} that $\im(\phi \otimes \k)^* \subseteq H^1(M_{\A}, \k)$ 
is a $(k-1)$-dimensional subspace, isotropic with respect to the cup product.
The linear map sending each $c_{\alpha}$ to $1 \in \FF_p$ defines an element of $H^1(S, \FF_p)$, 
denoted $\sigma_p(S)$. Clearly, $\phi^* (\sigma_p(S))=\sigma_p(\A)$, since $\mathcal{N}$ is in particular  $p$-reduced. 
By definition \eqref{eq:defbeta}, $\beta_p(\A) \neq 0$ as claimed, since $k \geq 3$.

Part \eqref{d1}. Note that $\phi: H_{\A} \rightarrow H_S$ induces homomorphisms $\phi^*:
\mathbb{T}(S) \rightarrow \mathbb{T} (M_{\A})$ and $\phi ^* :  H^1(S, \C) \hookrightarrow H^1(M_{\A}, \C)$, 
compatible with the surjective exponential maps of $S$ and $M_{\A}$. 
The map sending each $c_{\alpha}$ to $\exp (\frac{2\pi \sqrt{-1}}{k})$ 
defines an element of the character torus, $\rho_k(S) \in \mathbb{T}(S)$. Plainly, $\phi^* (\rho_k(S))=\rho_k(\A)$, 
since $\mathcal{N}$ is $k$-reduced. We also have $(\rho_k)^{ k \slash d}= \rho_d$, for both $S$ and $\A$, since 
we assumed that $d$ divides $k$. Hence, 
$\phi^*(\rho_d(S))=\rho_d(\A) \in \exp \; \phi^*(H^1(S, \C)) \subseteq \exp \; \RR^1_1(\A, \C)$, 
where the last inclusion follows from the argument in Part \eqref{d2}. 
Indeed, we know that $\phi^*(H^1(S, \C))$ is an isotropic subspace in $H^1(M_\A, \C)$, of dimension at least $2$, 
and we may simply use the definition of $\RR^1_1$. The conclusion $e_d(\A) > 0$ is a direct consequence 
of equality \eqref{eq:bmult}, since it is well-known that $\exp \; \RR_1^1(\A, \C) \subseteq \mathcal{V}^1_1 (M_\A)$; 
see e.g. \cite[Theorem D]{DP} for a more general result.
\end{proof}

\subsection{Reduced weighted partitions on complex reflection arrangements}
\label{ss43}

We begin the proof of Theorem \ref{thm:c}. Let $\A$ be a complex reflection arrangement and assume $e_d(\A)>0$,
with $2\le d\le 4$. We will show that $\rho_d(\A) \in \exp\ \RR_1^1(\A, \C)$ with the aid of Theorem \ref{thm:d}\eqref{d1},
which requires the existence of a certain weighted partition on $\A$. 

An easy preliminary remark is that the question  from \cite{DIM} 
always has a positive answer, for any arrangement $\A$ of rank at most $2$. To see this, note first that the assumption 
$e_d(\A) \neq 0$ is equivalent to $\rho_d \in \mathcal{V}^1_1(M_\A)$, by equality \eqref{eq:bmult}. When rank$(\A) \leq 2$, 
it is known that $\mathcal{V}^1_1(M_{\A})=\exp \; \RR_1^1(\A, \C)$, so the conclusion follows trivially. Consequently, 
we may also suppose that the rank is at least $3$. On the other hand, $e_d(\A)>0$ and $d=p^s$ together imply, 
via the modular bound \eqref{eq:bound}, that $\beta_p(\A)>0$. Therefore, $\A$ must be either the Hessian arrangement, or one 
of the arrangements from Theorem \ref{thm:b}(\ref{b3}). To apply Theorem \ref{thm:d}(\ref{d1}), we need to  
describe suitable weighted partitions on these arrangements, in each case.

The Hessian arrangement supports a reduced $4$-multinet  (actually, a $4$-net). The monomial arrangement $\A(m,m,3)$ 
has a reduced $3$-multinet (in fact, a $3$-net), as noted in \cite{FY}.

A (non-reduced) $3$-multinet on the full monomial arrangement $\A(m,1,3)$ was constructed in \cite{FY}. It is immediate 
to check that the weighted partition associated to this multinet is $3$-reduced, when $m \equiv 1 (\text{mod} \; 3)$.

The last case is $\A=\A(m,m,4)$, with hyperplanes $(H_{ij^\mu}) \; z_i - \omega^\mu z_j=0$, where 
$1 \leq i<j \leq 4, \; \mu \in \Z \slash m\Z$ and $\omega=\exp (\frac{2 \pi \sqrt{-1}}{m})$. We define a partition $\Pi$ 
with three blocks, $\{H_{ij^\mu}, H_{kh ^\nu}| \; \mu, \nu \in \Z \slash m\Z\}$, and set $\mathfrak{m} \equiv 1$ on $\A$. 
It is straightforward to verify axiom \eqref{eq:netax} by using the description of $2$-flats given in  \S\ref{ss23}. 
(Actually, this is a $3$-net on $\LL_{\leq 2}(\A)$.)

\subsection{Proof of Theorem \ref{thm:c} completed}
\label{ss44}

In case $\A=\A(G_{25})$, $d$ must be $2$ or $4$. We may take $k=4$ in Theorem \ref{thm:d} to obtain the desired conclusion.
The remaining cases, described in Theorem \ref{thm:b}(\ref{b3}), lead to $d=3$. Taking $k=3$, we conclude as before.
\hfill $\Box$

\subsection{Proof of Theorem \ref{thm:a}}
\label{ss45}

We may suppose that $\rank (\A) \geq 3$, since otherwise the conclusion is known (see \cite[p. 773]{MP}). 
By  the modular bound \eqref{eq:bound} and
Theorem \ref{thm:b}, $e_p(\A)=\beta_p (\A)$, when $\A$ is not $\A(G_{25})$ or one of 
the arrangements listed in Theorem \ref{thm:b}(\ref{b3}).
Moreover, we have to verify the conclusion only for $p=2$ (in case $\A(G_{25})$) or for $p=3$ (in the remaining cases).

 The equality $e_2(G_{25})=\beta_2(G_{25})=2$ is well-known (see e.g. \cite{PS14}). When $\A$ is not $\A(m,m,3)$ 
with $m \equiv 0\ (\bmod \; 3)$, we know that $\beta_3(\A)=1$. In these cases, 
we may use the $3$-reduced weighted $3$-partitions 
from \S\ref{ss43}, for $d=k=3$, exactly as in \S\ref{ss44}, to obtain that $e_3(\A)>0$. Now we are done, 
since the modular bound \eqref{eq:bound} implies that $e_3(\A) \leq 1$.

The last case, $\A=\A(m,m,3)$ with $m=3n$ and $p=3$, when $\beta_3(\A)=2$, requires a more careful treatment.

The (relabeled) hyperplanes of $\A$ are 
$\{H_{12^\alpha},\; H_{23^\beta}, \; H_{13^{-\gamma}}\ |\ \alpha, \beta, \gamma \in \Z \slash m\Z\}$. We recall 
from \S\ref{ss23} the two types of $2$-flats: $\{H_{ij^\alpha}\ | \; \alpha \in  \Z \slash m\Z\}$ and 
$\{H_{12^\alpha},\; H_{23^\beta}, \; H_{13^{-\gamma}}\}$ with $\alpha + \beta + \gamma=0$.

 We have to show that $e_3(\A) \geq 2$, in order to finish the proof. To this end, we need two reduced 
weighted $3$-partitions on $\A$.
 The first one, $\mathcal{N}$, is constructed in \cite{FY}. The blocks of the partition $\Pi$ are given by
 $\{H_{ij^\alpha}\ |\;  \alpha \in  \Z \slash m\Z\}_{1 \leq i< j \leq 3}$.
 The blocks of the second one, $\mathcal{N}'$, are defined by 
$\{H_{ij^\alpha}\ | \; 1 \leq i< j \leq 3, \alpha \equiv \tau (\bmod \;3)\}_{\tau \in \FF_{3}}$, 
with $\alpha$ replaced by $- \alpha$ when $\{i,j \}=\{1,3 \}$.

 Let us check axiom \eqref{eq:netax} for $\mathcal{N}'$. The $2$-flats $X$ appearing in axiom \eqref{eq:netax} clearly
 coincide with those $\A_X$ that contain two hyperplanes with different colours with respect to $\Pi '$.
 For $X=\{H_{ij^\alpha}| \;\alpha \in  \Z \slash m\Z \}$ we find that $n_X=n$.
For $X=\{H_{12^\alpha}, \; H_{23^\beta}, \; H_{13^{-\gamma}}\}$ with $\alpha + \beta + \gamma=0$,
the condition on colours translates to  $\alpha \nequiv \beta \nequiv \gamma (\bmod\ 3)$,
and implies that $n_X=1$. Hence $\Pi'$ defines a reduced weighted $3$-partition $\mathcal{N}'$.

Consider the two (surjective)  homomorphisms from Proposition \ref{prop:multilie}, $\phi, \phi' : H_\A \rightarrow H_S$. 
Clearly, $\phi^* \mathbb{T}(S)= \exp \; \phi^* H^1(S, \C)$
is a positive dimensional subtorus of $\mathbb{T} (M_\A)$, and similarly for $\phi'$. Moreover, 
$\phi^* \mathbb{T}(S) \subseteq \mathcal{V}_1^1(M_\A)$, since  $\phi^* H^1(S, \C) \subseteq H^1(M_\A, \C)$ is 
isotropic of dimension $2$, hence contained in $\RR_1^1(\A, \C)$, and likewise for $\phi'$. Therefore, 
we may find two irreducible components of $\mathcal{V}^1_1(M_\A)$, $W$ and $W'$, such that 
$\phi^*\mathbb{T}(S) \subseteq W$ and $\phi '^*\mathbb{T}(S) \subseteq W'$. On the other hand, 
$\rho_3 (\A) \in W \cap W'$, by the argument from the proof of Theorem \ref{thm:d}(\ref{d1}).

In this situation, it follows from a result of Artal Bartolo, Cogolludo and Matei
\cite[Proposition 6.9]{ACM} that $\rho_3(\A) \in \mathcal{V}_2^1(M_\A)$, if $W \neq W'$.
Hence, $e_3(\A) \geq 2$, by  equality \eqref{eq:bmult}, and we are done.

Suppose then that $W=W'$. We know from \cite{FY} that actually $W=\phi ^*\mathbb{T}(S)$, since $\phi$ comes from a $3$-net.
Taking tangent spaces at the origin $1 \in\mathbb{T}(M_\A)$, we infer that $\phi'^* H^1(S, \C) \subseteq \phi^* H^1(S, \C)$.

We identify $H^1(M_\A, \C)$ with $\C^{\A}$, using the distinguished $\Z$-basis. 
In this way, the subspace $\phi^* H^1(S, \C)$ (respectively $\phi'^* H^1(S, \C)$) is identified with 
the subset of those elements $\eta \in \C^{\A}$ (respectively $\eta' \in \C^{\A}$)
taking the constant values $a,b,c$ (respectively $a', b', c'$) on the blocks of $\Pi$ (respectively $\Pi'$), 
where $a+b+c=a'+b'+c'=0$.
Now, it is an easy matter to check that $\phi^* H^1(S, \C) \cap \phi'^* H^1(S, \C)=0$.
This contradiction finishes the proof of Theorem \ref{thm:a}. 
\hfill $\Box$

\subsection{Full monodromy action}
\label{ss46}

It follows from decomposition \eqref{eq:monintro} that the characteristic polynomial 
$\Delta_\A(t)=(t-1)^{|\A|-1} \Pi_{1< d|n} \Phi _d(t)^{e_d(\A)}$ 
encodes the full monodromy action on $H_1(F_\A, \Q)$.

The approach via modular bounds works only for prime power monodromy multiplicities, $e_{p^s}(\A)$.
One way to avoid this inconvenience is to impose restrictions on multiplicities of $2$-flats,
like in \cite{PS14} for instance, to arrive at full monodromy computations.
Unfortunately, as we saw in \S\ref{ss23}, arbitrarily high flat multiplicities
may appear for non-exceptional complex reflection arrangements.

Even in this kind of situation, there is hope related to the following well-known vanishing  criterion (see e.g. \cite{MP}):
if $d \notdivides |\A_X|$ for any $X \in\LL_2(\A)$, then $e_d(\A)=0$. 
It turns out that this works for full monomial arrangements of small rank.
The result below verifies in particular the strong form of the conjecture from \cite{PS14}.

\begin{prop}
\label{prop:full}
For $\A=\A(m,1,l)$, with $l=3$ or $4$, $\Delta_{\A}(t)=(t-1)^{|\A|-1}(t^2+t+1)$, 
if $l=3$ and $m \equiv 1 (\text{mod} \; 3)$,
and $\Delta_{\A}(t)=(t-1)^{|\A|-1}$, otherwise.
\end{prop}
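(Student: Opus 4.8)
The plan is to compute all the monodromy multiplicities $e_d(\A)$ for $d>1$ and then read off $\Delta_\A(t)$ from \eqref{eq:monintro}, which expresses it as $(t-1)^{|\A|-1}\prod_{1<d\mid n}\Phi_d(t)^{e_d(\A)}$. Here $n=|\A|$ equals $3(m+1)$ when $l=3$ and $6m+4$ when $l=4$. The key structural input is the list of $2$-flat multiplicities from \S\ref{ss23}: for both $\A(m,1,3)$ and $\A(m,1,4)$, every $X\in\LL_2(\A)$ satisfies $|\A_X|\in\{2,3,m+2\}$ (types $\rm{I_a}$, $\rm{I_b}$, $\rm{I_c}$, $\rm{I_d}$).

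First I would sharply restrict the set of $d$ for which $e_d(\A)$ can be nonzero. Combining $e_d(\A)=0$ for $d\notdivides n$ with the vanishing criterion recalled in \S\ref{ss46} (if $d\notdivides|\A_X|$ for every $X\in\LL_2(\A)$, then $e_d(\A)=0$), any $d>1$ with $e_d(\A)\neq 0$ must divide $n$ and also divide one of the multiplicities $2,3,m+2$; equivalently $d$ divides one of $\gcd(n,2)$, $\gcd(n,3)$, $\gcd(n,m+2)$. A short gcd computation then pins these down. For $l=3$ one has $\gcd(n,3)=3$ and, since $n=3(m+1)=3(m+2)-3$, also $\gcd(n,m+2)=\gcd(m+2,3)\in\{1,3\}$, so the only possible values are $d\in\{2,3\}$. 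For $l=4$ one has $3\notdivides n$ (as $n\equiv 1\ (\bmod\ 3)$) and, since $n=6m+4=6(m+2)-8$, $\gcd(n,m+2)=\gcd(m+2,8)$ divides $8$; hence every candidate $d$ is a power of $2$, namely $d\in\{2,4,8\}$.

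It then remains to evaluate the few surviving multiplicities. The powers of $2$ are killed all at once: since $\A(m,1,l)$ is not the Hessian arrangement, Theorem \ref{thm:b}(\ref{b2}) gives $\beta_2(\A)=0$, and the modular bound \eqref{eq:bound} yields $e_{2^s}(\A)\le\beta_2(\A)=0$ for all $s\ge 1$; in particular $e_2=e_4=e_8=0$. This already finishes the case $l=4$, giving $\Delta_\A(t)=(t-1)^{|\A|-1}$. For $l=3$ the only remaining divisor is the prime $d=3$, and Theorem \ref{thm:a} identifies $e_3(\A)=\beta_3(\A)$, whose value is supplied by Theorem \ref{thm:b}(\ref{b3}): $\beta_3(\A(m,1,3))=1$ when $m\equiv 1\ (\bmod\ 3)$ and $0$ otherwise. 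Substituting these into \eqref{eq:monintro} (with $\Phi_3(t)=t^2+t+1$) gives exactly the two stated forms.

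I expect the only genuine subtlety to be the gcd step. The multiplicity $m+2$ of the type-$\rm{I_a}$ flats is unbounded and may carry many prime divisors, so a priori the vanishing criterion alone permits a large family of candidate eigenvalues; this is precisely the obstruction to full monodromy computations flagged in \S\ref{ss46}. The resolution is that such $d$ must simultaneously divide $n$, and the arithmetic above shows $\gcd(n,m+2)$ is tiny --- dividing $3$ when $l=3$ and dividing $8$ when $l=4$ --- so the type-$\rm{I_a}$ flats contribute no eigenvalue beyond those already controlled by the modular bound and Theorems \ref{thm:a}--\ref{thm:b}. Once this is in hand, no separate treatment of higher prime powers (such as $d=9$ in the rank $3$ case) is needed, since the candidate set is already closed under the divisibility constraints from the outset.
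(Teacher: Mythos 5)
Your proposal is correct and follows essentially the same route as the paper: the same reduction of candidate divisors via the vanishing criterion combined with the arithmetic identities $3(m+1)=3(m+2)-3$ and $6m+4=6(m+2)-8$, the same use of the modular bound with $\beta_2(\A)=0$ to kill the surviving powers of $2$, and the same appeal to Theorems \ref{thm:a} and \ref{thm:b}(\ref{b3}) for $d=3$. The only difference is organizational (you restrict all candidates at once rather than splitting into prime and non-prime $d$), which does not change the substance.
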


\begin{proof}
We have to compute $e_d(\A)$ for all divisors $d>1$ of $|\A|$. If $d$ is prime, 
this was done in Theorem \ref{thm:a} and Theorem \ref{thm:b}.
It follows from \S\ref{ss23} that the $2$-flats of $\A$ have multiplicities
$2,3$ or $m+2, |\A| = 3(m+1)$ for $l=3$ and $|\A| = 2(3m+2)$ for $l=4$.
If $d$ is not prime and $e_d(\A) \neq 0$, the vanishing criterion forces $m \equiv -2 \; (\bmod\ d)$.
Writing that $|\A| \equiv 0 \;(\bmod \; d)$, we obtain for $l=3$ that $3 \equiv 0 \;(\bmod\;  d)$,
a contradiction, and $8 \equiv 0 \;(\bmod\;  d)$, for $l=4$.
In the second case, the modular bound implies that $\beta_2(\A)>0$, contradicting Theorem \ref{thm:b}(\ref{b2}).
\end{proof}

\begin{ack}
We are grateful to Alex Suciu for helpful discussions.
The second author thanks Universit\' e de Nice Sophia Antipolis, where
this work was completed, for support and hospitality. 
\end{ack}

\newcommand{\arxiv}[1]
{\texttt{\href{http://arxiv.org/abs/#1}{arxiv:#1}}}
\newcommand{\arx}[1]
{\texttt{\href{http://arxiv.org/abs/#1}{arXiv:}}
\texttt{\href{http://arxiv.org/abs/#1}{#1}}}

\end{document}